\documentclass[11pt,reqno]{amsart}

\usepackage{amsmath, amsthm, amssymb}
\usepackage{amsfonts}
\usepackage{bm,mathrsfs}
\usepackage{enumitem}

\usepackage{hyperref}

\usepackage[ansinew]{inputenc}
\usepackage[dvips]{epsfig}
\usepackage{graphicx}
\usepackage[english]{babel}
\usepackage{mathrsfs}
\usepackage{mathtools}
\DeclarePairedDelimiter{\norm}{\lVert}{\rVert}

\usepackage{thmtools}
\theoremstyle{plain}
\declaretheorem[title=Theorem, parent=section]{theorem}
\declaretheorem[title=Lemma,sibling=theorem]{lemma}
\declaretheorem[title=Proposition,sibling=theorem]{proposition}

\theoremstyle{definition}
\declaretheorem[title=Definition,sibling=theorem]{definition}
\declaretheorem[title=Remark,sibling=theorem]{remark}
\declaretheorem[title=Remark, numbered=no]{remark*}

\declaretheorem[title=Assumption, numbered=no]{assumption*}

\numberwithin{equation}{section}

\usepackage[backgroundcolor=white, bordercolor=blue,
linecolor=blue]{todonotes}

\usepackage{dsfont}
\usepackage{bbm}

\newcommand{\R}{\mathbb{R}}

\DeclareMathOperator*{\osc}{osc}

\newcommand{\average}{{\mathchoice {\kern1ex\vcenter{\hrule height.4pt
width 6pt depth0pt} \kern-9.7pt} {\kern1ex\vcenter{\hrule
height.4pt width 4.3pt depth0pt} \kern-7pt} {} {} }}

\begin{document}
\allowdisplaybreaks
\title{Asymptotically H\"{o}lder gradient regularity for degenerate or singular parabolic normalized $ p$-Laplace equations}

\author{Jiangwen Wang}
\author{Yini Zhang}
\author{Feida Jiang$^*$}

\address{School of Mathematics and Shing-Tung Yau Center of Southeast University, Southeast University, Nanjing 211189, P.R. China}
\email{\url{jiangwen\_wang@seu.edu.cn}}

\address{School of Mathematics, University of Bristol, Bristol BS8 1UG, UK}
\email{\url{ah24858@bristol.ac.uk}}

\address{School of Mathematics and Shing-Tung Yau Center of Southeast University, Southeast University, Nanjing 211189, P.R. China; Shanghai Institute for Mathematics and Interdisciplinary Sciences, Shanghai 200433, P.R. China}
\email{\url{jiangfeida@seu.edu.cn}}

\date{\today}
	\thanks{*corresponding author}

\keywords{degenerate or singular equation; parabolic normalized $ p$-Laplacian; regularity}

\subjclass[2020]{35K55, 35D40}

\allowdisplaybreaks

\begin{abstract}
We study bounded viscosity solutions of the degenerate or singular parabolic equation
\[
 u_t-|Du|^{\gamma}\Delta_p^{\mathrm{N}}u=f
 \qquad \text{in } Q_1,
\]
where
\[
 \Delta_p^{\mathrm{N}}u
 :=
 \Delta u+(p-2)
 \left\langle
 D^2u\frac{Du}{|Du|},
 \frac{Du}{|Du|}
 \right\rangle,
 \qquad
 -1<\gamma<\infty,\quad 1<p<\infty,
\]
and $f\in C^0(Q_1)\cap L^\infty(Q_1)$.
Given any prescribed $\alpha\in(0,1)$, we prove that there exists
$\varepsilon=\varepsilon(n,\alpha)>0$ such that
\[
 |p-2|+|\gamma|\leq\varepsilon
 \quad\Longrightarrow\quad
 u\in C_{\mathrm{loc}}^{1+\alpha,\frac{1+\alpha}{2}}(Q_1).
\]
In particular, the spatial gradient H\"older exponent can be chosen arbitrarily close to $ 1 $ as $(p,\gamma)\to(2,0)$.
This extends the result of Andrade and Santos (Calc. Var. Partial Differential Equations \textbf{61}, Paper No. 196, 2022) for $\gamma=0$ to the joint regime $(p,\gamma)\to(2,0)$ and answers affirmatively the question raised in Remark~1.1 therein.
\end{abstract}

\allowdisplaybreaks

\maketitle

\section{Introduction}

In this paper, we are concerned with the regularity properties of viscosity solutions to degenerate or singular parabolic normalized $ p$-Laplacian equations
 \begin{equation}\label{Main:eq1}
   u_t-|Du|^\gamma\Delta_p^{\mathrm{N}} u=f
 \quad   \text{in }   \quad   Q_1,
 \end{equation}
where $ -1 < \gamma < \infty $, $ 1 < p < \infty $, $ f \in C^{0}(Q_{1}) \cap L^{\infty}(Q_{1}) $, and
\begin{equation}\label{Intro:eq2}
 \Delta_p^{\mathrm{N}} u
 :=\Delta u+(p-2)\left\langle D^2u\frac{Du}{|Du|},\frac{Du}{|Du|}\right\rangle
\end{equation}
is the normalized $p$-Laplacian.

Equation \eqref{Main:eq1} contains several classical models.  When $\gamma=0$ it is the inhomogeneous parabolic normalized $p$-Laplace equation.
When $\gamma=p-2$, the second-order part is the usual parabolic $p$-Laplacian written in non-divergence form at noncritical points.  For $\gamma>0$ the diffusion degenerates along the critical set $\{Du=0\}$, whereas for $-1<\gamma<0$ the equation is singular there.

The interior gradient regularity of \eqref{Main:eq1} is well developed.  For the homogeneous parabolic normalized $p$-Laplacian, noticing that the operator \eqref{Intro:eq2} has a singularity on the set $ \{ Du =0 \} $, which implies that we can not use directly the classic regularity theory of viscosity solutions, as in \cite{CL12, W92}. Jin and Silvestre in \cite{JS17} first established H\"older continuity of the spatial gradient, and Attouchi--Parviainen \cite{AP18} treated the inhomogeneous normalized equation. Shortly after, Imbert--Jin--Silvestre \cite{IJS19} developed a general $C^{1,\alpha}$ theory for singular and degenerate parabolic equations, while Attouchi and Attouchi--Ruosteenoja obtained regularity for the non-divergence model \eqref{Main:eq1} in the degenerate and singular regimes, respectively; see \cite{A20, AR20}.  Variable-exponent normalized parabolic equations were studied by Fang--Zhang \cite{FZ21}, and related borderline and second-order estimates appear in \cite{ABM23, FP23}.  These results provide a positive H\"older exponent depending on the structural parameters, but they do not by themselves identify the behavior of the exponent as the equation approaches the heat equation.

A different phenomenon was founded by Andrade--Santos \cite{AS22}. For
\begin{equation*}
 u_t-\Delta_p^{\mathrm{N}} u=f  \quad  \text{in }   \quad   Q_1,
\end{equation*}
they proved that for every fixed $\alpha<1$ the solution belongs locally to $C^{1+\alpha,(1+\alpha)/2}$ provided $p$ is sufficiently close to $2$.  In particular, the gradient becomes asymptotically Lipschitz as $p\to2$. Moreover, in \cite[Remark 1.1]{AS22}, Andrade--Santos explicitly asks the following problem:

\vspace{2mm}

\textit{`` It is not clear if it is possible to extend \cite[Theorem 1.1]{AS22} to \eqref{Main:eq1} under the assumption that $ |\gamma|+|p-2| \leq \epsilon  $. ''}

\vspace{2mm}

The purpose of the present paper is to answer this question affirmatively.

Our main result is the following.

\begin{theorem}
\label{thm:main}
Let $u\in C^{0}(Q_1)$ be a bounded viscosity solution of
\begin{equation*}
 u_t-|Du|^\gamma\Delta_p^{\mathrm{N}} u=f
 \quad   \text{in }    Q_1,
\end{equation*}
where $ 1<p<\infty, -1<\gamma<\infty $, and $f\in C^{0}(Q_1)\cap L^\infty(Q_1)$. Given $\alpha\in(0,1)$, there exists
$
 \varepsilon =\varepsilon(\alpha, n)>0 $ such that if
\[
|p-2|+|\gamma|\le\varepsilon,
\]
then
\[
 u\in C^{1+\alpha, \frac{1+\alpha}{2}}(Q_{1/2}).
\]
Moreover,
\[
 ||u||_{C^{1+\alpha,\frac{1+\alpha}{2}}(Q_{1/2})}
 \le \mathrm{C},
\]
where $ \mathrm{C} $ depends only on $n$, $\alpha$, $||u||_{L^\infty(Q_1)}$, and $||f||_{L^\infty(Q_1)}$.  Consequently the spatial gradient H\"older exponent may be chosen arbitrarily close to $ 1 $ when $(p,\gamma)$ is sufficiently close to $(2,0)$.
\end{theorem}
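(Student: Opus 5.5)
The plan is to run the Andrade--Santos compactness/approximation scheme \cite{AS22} for \eqref{Main:eq1}. The heat equation, which is recovered at $(p,\gamma)=(2,0)$, plays the role of the limiting profile. We will prove a flatness-improvement lemma at the origin and iterate it with the usual intrinsic handling of the degenerate or singular factor $|Du|^\gamma$.

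\textbf{Step 1 (normalization and uniform compactness).} By scaling $u_r(x,t)=\lambda^{-1}u(rx,r^2t)$ we may assume $\|u\|_{L^\infty(Q_1)}\le 1$ and $\|f\|_{L^\infty(Q_1)}\le\delta$, with $\delta$ small. We will use the $C^{1+\beta_0}$ estimates of Imbert--Jin--Silvestre, Attouchi and Attouchi--Ruosteenoja \cite{IJS19, A20, AR20}, applied for $|p-2|+|\gamma|\le 1/2$. There both $\beta_0>0$ and the constants are uniform in $(p,\gamma)$ in this compact range. This gives uniform Hölder continuity in $(x,t)$ of both $u$ and of functions of the form $u-\ell$, where $\ell$ is an affine function in $x$. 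The latter is the version we need, since we subtract affine pieces: the equation satisfied by $v=u-q\cdot x$ is
\[
v_t-|Dv+q|^\gamma\Big(\Delta v+(p-2)\big\langle D^2v\tfrac{Dv+q}{|Dv+q|},\tfrac{Dv+q}{|Dv+q|}\big\rangle\Big)=f .
\]
We need Hölder/Lipschitz bounds for $v$ that are uniform in $q\in\mathbb{R}^n$. For $\gamma\ne0$ this requires care, and we will use the Imbert--Silvestre style estimates valid for $|q|$ large or small, as in \cite{A20, AR20}.

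\textbf{Step 2 (approximation lemma).} For every $\eta>0$ there should be a $\delta>0$ such that the following holds. If $|p-2|+|\gamma|\le\delta$, $\|f\|_\infty\le\delta$, $|q|$ is arbitrary, and $v$ solves the equation above with $\|v\|\le1$, then there is a caloric $h$ with $\|v-h\|_{L^\infty(Q_{1/2})}\le\eta$. We argue by contradiction: take sequences $p_k\to2$, $\gamma_k\to0$, $f_k\to0$ and $q_k$, with $q_k\to q_\infty$ or $|q_k|\to\infty$. By Step 1, $v_k\to v_\infty$ locally uniformly, and we show $v_\infty$ is a viscosity solution of the heat equation. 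The delicate point is the factor $|Dv+q|^{\gamma_k}$, which can degenerate or blow up near $Dv+q=0$. We handle it exactly as in Imbert--Silvestre's stability argument. At a test function $\varphi$ with $D\varphi(x_0)+q_\infty\neq0$ the coefficients converge to $1$ and the $(p-2)$ term vanishes. When $D\varphi(x_0)+q_\infty=0$, we use the standard trick of tilting the test function or arguing with $D^2\varphi$ eigenvalues, as in \cite{IJS19, A20}, to show $\varphi_t-\Delta\varphi\le0$ (resp.\ $\ge0$) still holds. The singular case $\gamma<0$ is handled through the equivalent formulation $|Du|^{-\gamma}u_t-\Delta_p^N u=|Du|^{-\gamma}f$ away from the critical set.

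\textbf{Step 3 (iteration).} Since caloric functions satisfy $\sup_{Q_\rho}|h-h(0)-Dh(0)\cdot x|\le C\rho^2$ with a universal $C$, we choose $\rho$ so that $C\rho^{2}\le\frac12\rho^{1+\alpha}$ and then $\eta=\frac12\rho^{1+\alpha}$. This fixes $\delta$, hence $\varepsilon=\delta(\alpha,n)$. Induction then produces affine functions $\ell_k=a_k+q_k\cdot x$ with $\sup_{Q_{\rho^k}}|u-\ell_k|\le\rho^{k(1+\alpha)}$ and $|a_{k+1}-a_k|+\rho^k|q_{k+1}-q_k|\le C\rho^{k(1+\alpha)}$. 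At each step the rescaled function $w_k(x,t)=\rho^{-k(1+\alpha)}(u-\ell_k)(\rho^kx,\rho^{2k}t)$ solves an equation of the same form with slope $q=\rho^{-k\alpha}q_k$. It also carries an extra coefficient $\rho^{k\alpha\gamma}$ in front of the diffusion term. This is exactly why Step 2 must be uniform in $q$, and why we must absorb the factor $\rho^{k\alpha\gamma}$. We absorb it by a time rescaling of $w_k$. Alternatively, as in \cite{A20}, we can split into the cases where $|q_k|$ is large relative to $\rho^{k\alpha}$ and where it is not. In the first case the equation is uniformly parabolic near the point and we invoke classical $C^{1+\alpha}$ theory for small perturbations of the heat operator. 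In the second case the smallness regime applies. The source term stays small because $\rho^{k(1-\alpha)}\to 0$. Translating to every point of $Q_{1/2}$ and applying the standard Campanato-type characterization yields the $C^{1+\alpha,\frac{1+\alpha}{2}}$ bound. Time regularity of order $(1+\alpha)/2$ follows from the parabolic scaling of the cylinders.

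\textbf{Main obstacle.} We expect the main obstacle to be the uniformity in the slope $q$ together with the scaling factor $\rho^{k\alpha\gamma}$ produced by $|Du|^\gamma$. Neither issue arises in \cite{AS22}, where $\gamma=0$. We will try first the dichotomy on $|q_k|$ versus $\rho^{k\alpha}$ mentioned in Step 3, with a time rescaling that makes the coefficient equal to one.
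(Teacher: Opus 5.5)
Your architecture (heat approximation, intrinsic time rescaling to absorb $\rho^{k\alpha\gamma}$, a dichotomy on the normalized slope, Campanato) matches the paper's, but two steps fail as written.

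\textbf{Step 2 and the large-slope branch.} Step~2 is false for arbitrary $q$. If $|q_k|\to\infty$ and $\gamma_k\to0$, the factor $|Dv_k+q_k|^{\gamma_k}\approx|q_k|^{\gamma_k}$ can converge to any $c\in[0,\infty]$ (e.g.\ $|q_k|=e^{c_0/\gamma_k}$ with $c_0\gamma_k>0$ gives $c=e^{c_0}$). The limit then solves $v_t=c\,\Delta v$, not the heat equation. Likewise, for fixed $\gamma\neq0$, letting $|q|\to\infty$ drives the effective diffusivity $|q|^\gamma$ to $0$ or $\infty$, so no modulus uniform in $q$ as in Step~1 can exist. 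Since the normalized slope $\rho^{-k\alpha}q_k$ blows up whenever $Du(0,0)\neq0$, the approximation can only be used for $|q|\le M$, as in the paper. Everything therefore rests on the large-slope branch, where ``the equation is uniformly parabolic near the point'' is asserted but not proved. The flatness bound $\|u-\ell_k\|_\infty\le\rho^{k(1+\alpha)}$ says nothing about gradients of test functions. Moreover, without an upper bound on the slope, $|\xi|^\gamma(\textbf{I}\mathrm{d_{n}}+(p-2)\widehat\xi\otimes\widehat\xi)$ is not uniformly close to the identity.

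The paper closes this gap in three moves:
\begin{itemize}
\item It stops at the \emph{first} index $k_0$ with $|l_{k_0}|>2\lambda_{k_0}$, so the increment bound also gives $|q_{k_0}|\le M_*=\rho^{-\beta}(2+\mathrm{C}_0)$, independently of $k_0$.
\item It proves (Lemma~\ref{lem:noncriticalLip}, by Ishii--Lions) that the small deviation $w_{k_0}$ is spatially $1$-Lipschitz, uniformly for $(p,\gamma)$ near $(2,0)$. Interpolating uniform $C^{1,\beta_0}$ bounds against $\|w_{k_0}\|_\infty\le\eta$ would serve as well. This confines every test gradient of $w_{k_0}+q_{k_0}\cdot x$ to the annulus $1\le|\xi|\le M_*+1$.
\item A cutoff extension then gives a globally uniformly parabolic operator within $\mathrm{C}(|p-2|+|\gamma|)$ of the heat operator. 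This closeness is stable under the slope translations of a second flatness iteration, which yields $C^{1+\beta}$ bounds independent of $k_0$ (Lemma~\ref{lem:smoothimproved}).
\end{itemize}

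\textbf{Time exponents.} Your claim that ``time regularity of order $(1+\alpha)/2$ follows from the parabolic scaling of the cylinders'' is wrong for $\gamma<0$. After absorbing $\rho^{k\alpha\gamma}$ into time, the cylinder at scale $r$ has height $r^{2-\alpha\gamma}<r^2$. You then only get $|u(x,t)-u(x,s)|\lesssim|t-s|^{\frac{1+\alpha}{2-\alpha\gamma}}$ and $|Du(x,t)-Du(x,s)|\lesssim|t-s|^{\frac{\alpha}{2-\alpha\gamma}}$, strictly below the required exponents.

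The paper runs the entire scheme with an auxiliary exponent $\beta=\frac{1+\alpha}{2}>\alpha$. It then takes $|\gamma|\le\gamma_*$ with $\frac{1+\beta}{2+\beta\gamma_*}\ge\frac{1+\alpha}{2}$ and $\frac{\beta}{2+\beta\gamma_*}\ge\frac{\alpha}{2}$. It also needs $\beta(1+\gamma_*)<1$, for two reasons:
\begin{itemize}
\item After intrinsic rescaling the source carries the factor $\rho^{k(1-\beta(1+\gamma))}$, not $\rho^{k(1-\alpha)}$.
\item For $\gamma>0$, the Taylor remainder of the caloric approximant on the longer cylinder is of order $\rho^{2-\beta\gamma}$, which must still lie below $\frac12\rho^{1+\beta}$. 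Your choice $C\rho^2\le\frac12\rho^{1+\alpha}$ ignores this.
\end{itemize}
For $\gamma\ge0$ the final estimate is proved on standard cylinders, which lie inside the intrinsic ones. This holds also below the stopping scale, since $\tau_{k_0}=r_{k_0}^{2-\beta\gamma}\ge r_{k_0}^2$.
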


\begin{remark}\label{rem:published-question}
Theorem~\ref{thm:main} addresses the question stated in \cite[Remark~1.1]{AS22}. The point is not merely that solutions of \eqref{Main:eq1} are $C^{1,\alpha_0}$ for some structural exponent $\alpha_0>0$; rather, the exponent is prescribed in advance and can be arbitrarily close to $1 $, at the cost of shrinking a joint neighborhood of $ (p,\gamma)=(2,0) $.
\end{remark}

The result is consistent with recent developments but is of a different nature. Boundary $C^{1,\alpha}$ estimates for general parabolic $p$-Laplace type equations were developed in \cite{LLYZ25}; in that setting the near-Lipschitz improvement for $p$ close to $2$ is singled out in the case $ \gamma=0 $. A related asymptotic regularity phenomenon was established by Pimentel--Rampasso--Santos \cite{PRS20} for the elliptic $ p$-Poisson equation: for every prescribed $ \alpha \in (0,1) $, solutions are locally $ C^{1, \alpha} $ when $ p $ is sufficiently close to $ 2 $, via a perturbative argument based on harmonic approximation and a gradient-dependent separation of critical and uniformly elliptic regimes. For more related results, see \cite{ASU26, AR18, WYJ25, AS26, W26}. The present problem requires an additional control of the intrinsic time geometry and of the transition between critical and noncritical gradient regimes.  To the best of our knowledge, Theorem~\ref{thm:main} is the first joint $(p,\gamma)\to(2,0)$ asymptotically Lipschitz gradient result for the inhomogeneous parabolic equation \eqref{Main:eq1}.

\subsection{Ideas of proof of Theorem~\ref{thm:main}}

The ideas is inspired in part by the elliptic perturbative approach of \cite{PRS20} and \cite{CC95}. In the present parabolic setting, however, the argument has to be combined with an intrinsic scaling and a viscosity analysis of the zero-gradient set.

The argument starts from a compactness principle (Lemma~\ref{Section 3:lem1}) showing that normalized deviations from affine functions are close to caloric functions when $ |p-2| + |\gamma| + ||f||_{\infty} $ is sufficiently small, see Lemma~\ref{Section 3:lem2}. This yields a one-step improvement of flatness (Lemma~\ref{lem:onestep}), which is then iterated on the intrinsic cylinders associated with the gradient scale. The iteration continues as long as the normalized affine slope remains bounded. If this happens at every scale, one obtains the desired first-order expansion directly, see Section~\ref{Section 5.1}. Otherwise, at the first stopping scale the affine slope dominates the remaining oscillation, and the equation enters a noncritical regime where uniformly parabolic
estimates become available, see Lemma~\ref{lem:smoothimproved}.  The estimates from the two regimes are then patched together, and a Campanato comparison of tangent planes (Proposition~\ref{prop:campanato-consequences}) gives the H\"older continuity of the gradient.

Several technical obstacles arise in implementing this program. In particular:

\begin{enumerate}[label=\roman*)]

\item The first is the loss of the standard
parabolic scaling when $ \gamma \neq 0 $.  If the gradient error at scale $ r $  is of order $ \beta $, then the natural
time scale is $ r^{2-\beta \gamma}$. So the flatness iteration has to be performed in an intrinsic geometry rather than on the usual parabolic cylinders in \cite{AS22};

\vspace{3mm}

\item A second difficulty appears in the compactness argument. Although $ |\xi|^{\gamma} \to 1  $ as $ \gamma \to 0 $ for every fixed $ \xi \neq 0 $, this convergence is not uniform near $ \xi =0 $.  Thus the heat equation cannot be identified by a direct convergence of the operators. We overcome this by treating nonzero-gradient test functions in the usual way and using common admissible quartic tests at zero gradient;

\vspace{3mm}

\item Finally, in the stopping regime one needs estimates which are uniform as $ \gamma \to 0 $ from both the
degenerate ($ \gamma > 0 $) and singular ($ -1 < \gamma < 0$) sides. A uniform Lipschitz estimate for the normalized deviation places all relevant test gradients in a fixed annulus away from zero. On this annulus the coefficient
matrix is uniformly close to $ \textbf{I}\mathrm{d_{n}} $; after a harmless global extension, the equation becomes a uniformly parabolic perturbation of the heat equation and the standard improvement-of-flatness argument applies. In the singular case $ -1 < \gamma < 0 $, the intrinsic time exponent is slightly weaker than the standard one. This loss is compensated by carrying out the iteration with an auxiliary exponent $ \beta > \alpha $ and then choosing $ |\gamma|$ sufficiently small.
\end{enumerate}

\subsection{Structure of the paper} The paper is organized as follows. In Section~\ref{Section 2}, we introduce the viscosity
framework. In Section~\ref{Section 3}, we are devoted to establish the uniform compactness and heat-approximation results.
Section~\ref{Section 4} proves the one-step intrinsic improvement of flatness. Section~\ref{Section 5} develops the critical iteration and treats the stopping situation, including the noncritical Lipschitz and smooth-regime estimates.
In Section~\ref{Section 6}, we combine the two alternatives to obtain pointwise affine approximations and their campanato consequences. Finally, Section~\ref{Section 7} completes the proof of Theorem~\ref{thm:main}.

\subsection{Notations}
We summarize below the basic notation used throughout the paper.
\begin{itemize}

\item For $z_0=(x_0,t_0)\in\R^{n+1}$ and $r>0$, set
\[
 Q_r(z_0):=B_r(x_0)\times(t_0-r^2,t_0],
 \qquad Q_r:=Q_r(0,0).
\]

\item For $\lambda>0$ we also use the intrinsic cylinder
\begin{equation*}
 Q_r^\lambda(z_0)
 :=B_r(x_0)\times(t_0-r^2\lambda^{-\gamma},t_0],
 \qquad Q_r^\lambda:=Q_r^\lambda(0,0).
\end{equation*}

\item $ \text{Sym}(n) $ denotes the space of all $ n \times n $ symmetric matrices in $ \mathbb{R}^{n} $.

\item For $0<\sigma<1$ and a parabolic cylinder $Q$, define
\[
 [v]_{C^{\sigma,\frac{\sigma}{2}}(Q)}
 :=\sup_{(x,t)\ne(y,s)\in Q}
 \frac{|v(x,t)-v(y,s)|}{|x-y|^\sigma+|t-s|^{\frac{\sigma}{2}}}.
\]
We use the standard norm
\begin{align*}
 \norm{u}_{C^{1+\alpha,\frac{1+\alpha}{2}}(Q)}
 :=&\ \norm{u}_{L^\infty(Q)}+\norm{Du}_{L^\infty(Q)}
 +[Du]_{C^{\alpha,\frac{\alpha}{2}}(Q)}\\
 &+\sup_{x}\sup_{t\ne s}
 \frac{|u(x,t)-u(x,s)|}{|t-s|^{\frac{1+\alpha}{2}}},
\end{align*}
with the suprema restricted to points for which the relevant pairs belong to $ Q $.

\item $ \textbf{I}\mathrm{d_{n}} $ denotes the $ n \times n $ identity matrix.
\end{itemize}

{\bf Acknowledgments}. F. Jiang has been supported by the National Natural Science Foundation of China (No. 12271093) and the Jiangsu Provincial Scientific Research Center of Applied Mathematics (Grant No. BK20233002), and Shanghai Institute for Mathematics and Interdisciplinary Sciences (SIMIS) under grant number SIMIS-ID-2025-AD.

\vspace{2mm}

\section{Preliminaries}\label{Section 2}

In this section, we recall the viscosity framework for \eqref{Main:eq1} and record its basic scaling properties. These preliminaries will be used repeatedly in the compactness and improvement-of-flatness arguments below.

\subsection{Viscosity solution}
We adapted the same notion of viscosity solutions to \eqref{Main:eq1} as the one used in \cite{IJS19, A20}. For the very singular case $ \gamma < 0 $, the definition in the sense of Ohnuma--Sato \cite{OS97} requires the introduction of a set of admissible test functions when the gradient of $ u $ is $ 0 $ whereas no special restrictions are needed in the degenerate case $ \gamma > 0 $.

\begin{definition}
A locally bounded and upper semi-continuous function $ u $ in $ Q_{1} $ is called a viscosity sub-solution of \eqref{Main:eq1} if, for any point $ (x_{0}, t_{0}) \in Q_{1} $, one of the following conditions holds
\medskip
\noindent

(i) Either for every $\varphi \in C^2(Q_1)$, such that $u-\varphi$ has a local maximum at $(x_0,t_0)$ and $D\varphi(x_0,t_0)\neq 0$ it holds
\begin{equation}\label{Section2:eq1}
\partial_t\varphi(x_0,t_0)-|D\varphi(x_0,t_0)|^\gamma \Delta_p^{\mathrm{N}}\varphi(x_0,t_0)\leq f(x_0,t_0).
\end{equation}

(ii) Or if there exist $\delta_1$ and $\varphi\in C^2((t_0-\delta_1,t_0+\delta_1))$, such that
\begin{equation}\label{Section2:eq2}
\begin{cases}
\varphi(t_0)=0\\
u(x_0,t_0)\geq u(x_0,t)-\varphi(t) \quad \text{for all } t\in(t_0-\delta_1,t_0+\delta_1) \\
\displaystyle\sup_{(t_0-\delta_1,t_0+\delta_1)}\big(u(x,t)-\varphi(t)\big) \text{ is constant in a neighborhood of } x_0,
\end{cases}
\end{equation}
then
\[
\varphi'(t_0)\leq f(x_0,t_0).
\]

A locally bounded and lower semi-continuous function $u$ in $Q_1$ is called a viscosity supersolution of \eqref{Main:eq1} if, for any point $(x_0,t_0)\in Q_1$ one of the following conditions holds

(i) Either for every $\varphi \in C^2(Q_1)$, such that $u-\varphi$ has a local minimum at $(x_0,t_0)$ and $D\varphi(x_0,t_0)\neq 0$ it holds
\[
\partial_t\varphi(x_0,t_0)-|D\varphi(x_0,t_0)|^\gamma \Delta_p^{\mathrm{N}}\varphi(x_0,t_0)\geq f(x_0,t_0).
\]

(ii) Or if there exist $\delta_1$ and $\varphi\in C^2((t_0-\delta_1,t_0+\delta_1))$, such that
\begin{equation}\label{Section2:eq3}
\begin{cases}
\varphi(t_0)=0\\
u(x_0,t_0)\leq u(x_0,t)-\varphi(t) \quad \text{for all } t\in(t_0-\delta_1,t_0+\delta_1)   \\
\displaystyle\inf_{(t_0-\delta_1,t_0+\delta_1)}\big(u(x,t)-\varphi(t)\big) \text{ is constant in a neighborhood of } x_0,
\end{cases}
\end{equation}
then
\[
\varphi'(t_0)\geq f(x_0,t_0).
\]

\end{definition}

\begin{remark}
When $\gamma=0$, we use the standard viscosity interpretation of the normalized $p$-Laplacian.  At a touching point with $D\varphi \ne 0$ the inequality is \eqref{Section2:eq1} with $\gamma=0$.  At $ D\varphi =0 $, the subsolution condition is
\[
 \varphi_t-\Delta \varphi-(p-2)\lambda_{\max}(D^2\varphi)\le f
 \quad\text{if }p\ge2,
\]
and
\[
\varphi_t- \Delta\varphi -(p-2)\lambda_{\min}(D^{2}\varphi)\le f
 \quad\text{if }1<p<2.
\]
For supersolutions, $\lambda_{\max}$ and $\lambda_{\min}$ are interchanged.  This is exactly the semicontinuous-envelope convention used for the parabolic normalized $p$-Laplacian in \cite{AS22, AP18}.
\end{remark}

\subsection{Scaling properties}

In what follows, the basic scaling of \eqref{Main:eq1} will be used repeatedly. More precisely, given $ (x_{0}, t_{0}) \in Q' \Subset Q_{1} $, for $r,A, \epsilon, \eta>0$, let
\begin{equation}\label{eq:basic-scaling-v}
 v(x,t):=\frac{u(x_0+rx,t_0+\tau t)}{A},
 \quad
 \tau:=r^{2+\gamma}A^{-\gamma}.
\end{equation}
A direct computation yields that $ v $ solves, in the viscosity sense, the following equation,
\begin{equation}\label{eq:basic-scaling-rhs}
 v_t-|Dv|^\gamma\Delta_p^{\mathrm{N}} v
 = \widetilde{f}(x,t)  \quad    \text{in }  Q_{1},
\end{equation}
where
\begin{equation*}
  \widetilde{f}(x,t):= \frac{r^{2+\gamma}}{A^{1+\gamma}}
 f(x_0+rx,t_0+\tau t).
\end{equation*}
To ensure $ ||v||_{L^{\infty}(Q_{1})} \leq \eta $ and $ ||\widetilde{f}||_{L^{\infty}(Q_{1})} \leq \epsilon $, we choose the spatial scale $ r > 0 $ sufficiently small and then set
\begin{equation*}
  A:= 1+ \frac{1}{\eta}||u||_{L^{\infty}(Q_{1})} + ||f||_{L^{\infty}(Q_{1})}^{\frac{1}{1+\gamma}}.
\end{equation*}

\vspace{2mm}

\section{Uniform compactness}\label{Section 3}

In this section, we begin with the shifted deviation equation
\begin{equation}\label{eq:shifted}
 w_t-|Dw+q|^\gamma
 \left[
 \Delta w+(p-2)
 \left\langle
 D^2w\frac{Dw+q}{|Dw+q|},
 \frac{Dw+q}{|Dw+q|}
 \right\rangle
 \right]=f,
\end{equation}
with a fixed vector $ q \in \R^n $. At points where $Dw+q=0$, this is understood through the viscosity formulation inherited by $w+q\cdot x $. The purpose of this section is to obtain compactness uniformly with respect to the affine shift $ q $ in a fixed bounded set and with respect to $ (p,\gamma) $ near $ (2,0) $.

Now we provide a compactness result for viscosity solutions of equation~\eqref{eq:shifted}.      

\begin{lemma}\label{Section 3:lem1}
Fix $M>0$ and $\gamma_0\in(0,1/4]$. There exist
$\theta\in(0,1)$ and $ \mathrm{C} > 0$, depending only on $n,M,\gamma_0$, such that
the following holds for
\[
 |p-2|\leq\frac12,\qquad |\gamma|\leq\gamma_0,
 \qquad |q|\leq M.
\]
If $w$ solves \eqref{eq:shifted} in $Q_1$ and
\[
 \norm{w}_{L^{\infty}(Q_1)}+\norm{f}_{L^{\infty}(Q_1)}\leq1,
\]
then
\[
 |w(x,t)-w(y,s)|
 \leq \mathrm{C}  \big(|x-y|^{\theta}+|t-s|^{\theta/2}\big)
\]
for $(x,t),(y,s)\in Q_{3/4}$.
\end{lemma}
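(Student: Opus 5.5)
We prove the estimate in two steps: a uniform Lipschitz estimate in space, obtained by doubling variables in the Ishii--Lions style, and then a Hölder estimate in time, obtained by comparison with explicit barriers. Throughout, the ellipticity of the operator enters only through the matrix
\[
A(\xi)=|\xi|^\gamma\Big(\mathbf{I}\mathrm{d_{n}}+(p-2)\frac{\xi\otimes\xi}{|\xi|^2}\Big),\qquad \xi=Dw+q .
\]
Since $|p-2|\le\frac12$, the eigenvalues of $A(\xi)$ lie between $\frac12|\xi|^\gamma$ and $\frac32|\xi|^\gamma$. The only non-uniformity therefore comes from $|\xi|^\gamma$ with $|\gamma|\le\gamma_0\le\frac14$. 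In the region where $|\xi|$ is bounded above and below, this factor is comparable to $1$ with constants depending only on $M$ and $\gamma_0$.

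\textbf{Step 1: spatial Hölder continuity.} Fix $t_0$ and $x_0\in B_{3/4}$, and consider
\[
\Phi(x,y,t)=w(x,t)-w(y,t)-L\varphi(|x-y|)-\frac{K}{2}|x-x_0|^2-\frac{K}{2}|y-x_0|^2-\frac{K}{2}(t-t_0)^2 .
\]
Here $\varphi(r)=r-\kappa r^{3/2}$ (or $\varphi(r)=r^\theta$), and $K$ is large depending on $\|w\|_\infty\le1$. Suppose the maximum of $\Phi$ is positive. We apply the parabolic Ishii lemma at the maximum point $(\bar x,\bar y,\bar t)$ and write $a=L\varphi'\,e$ with $e=(\bar x-\bar y)/|\bar x-\bar y|$. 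The test gradients are then $a+q+O(K)$ at $\bar x$ and $a+q+O(K)$ at $\bar y$.

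We choose $L\ge L_0(M)$ large, for instance $L\varphi'\ge 4M+4K$. This forces both shifted gradients $\xi_1,\xi_2$ to satisfy
\[
\tfrac{L}{4}\le|\xi_i|\le 2L .
\]
In particular they are nonzero, so only alternative (i) of the definition is relevant, with no degenerate-point issue. On this annulus the weights $|\xi_i|^\gamma$ are comparable to $L^\gamma$. Since $|\gamma|\le\frac14$, we have $L^{\gamma}\ge L^{-1/4}$.

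Next we subtract the viscosity inequalities and use the matrix inequality from Ishii's lemma. The second-order term contributes $-cL^{\gamma}L|\varphi''|/|\bar x-\bar y|$, up to errors. Here $\varphi''$ is strictly negative in the direction $e$, and the uniform ellipticity of $A$ lets us exploit this direction. The remaining terms are:
\begin{itemize}
\item the difference $A(\xi_1)-A(\xi_2)$, of size $O(K/L)\cdot L^\gamma$ times the Hessian bound;
\item the time-derivative terms;
\item the contribution $2\|f\|_\infty\le2$.
\end{itemize}
For $L$ large, all of these are beaten by the main term, because the gain of order $L^{1+\gamma}\ge L^{3/4}$ dominates the bounded quantities. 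This contradiction gives $|w(x,t)-w(y,t)|\le C|x-y|$, or at least a $\theta$-Hölder bound, on $Q_{3/4}$ (shrinking to $Q_{7/8}$ first). We expect this step to be the main obstacle. The weight $|\xi|^\gamma$ must be controlled for $\gamma<0$ as well as for $\gamma>0$, and the way to do so is to keep both gradients in an annulus comparable to $L$, which makes the argument uniform in $q$, $p$ and $\gamma$.

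\textbf{Step 2: Hölder continuity in time.} Fix $(x_0,t_0)\in Q_{3/4}$ and $r$ small. We compare $w$ with barriers of the form
\[
w(x_0,t_0)+C r^\theta+\frac{B}{r^{2-\theta}}|x-x_0|^2+\Lambda(t-t_0)
\]
on $B_r(x_0)\times[t_0,t_0+r^2]$, and with the symmetric barriers from below. The spatial Hölder bound from Step 1 controls the lateral boundary and the initial datum. To bound $\Lambda$, we evaluate the operator on the barrier. Its gradient is $O(B r^{\theta-1})+q$, so its size is at most $M+O(B r^{\theta-1})$. Hence the factor $|\xi|^\gamma\cdot|D^2|$ is at most
\[
C(M)\,r^{(\theta-1)\gamma_0}\cdot B r^{\theta-2}\le C r^{\theta-2-\gamma_0}
\]
when $\gamma>0$. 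When $\gamma<0$, the factor $|\xi|^\gamma$ blows up near zero gradient, so there we use alternative (ii) and the admissible $C^2$ test functions, which behave well for the quadratic barrier.

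We then take $\theta$ smaller and choose the barrier's spatial scale as a power adapted so that the time gain works out. This gives $|w(x_0,t)-w(x_0,t_0)|\le C|t-t_0|^{\theta'/2}$ for a possibly smaller $\theta'$, which we rename $\theta$. Combining Steps 1 and 2 gives the stated estimate, with $\theta$ and $\mathrm{C}$ depending only on $n$, $M$ and $\gamma_0$.
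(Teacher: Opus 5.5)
The gap is in Step~2 when $\gamma<0$. There your quadratic barrier is not a supersolution, and no choice of $\Lambda$ fixes it.

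Its shifted gradient $\xi=D\psi+q=q+2Br^{\theta-2}(x-x_0)$ ranges over the ball of radius $2Br^{\theta-1}$ about $q$. So whenever $|q|<2Br^{\theta-1}$, it vanishes inside $B_r(x_0)$, and near that point $|\xi|^{\gamma}\operatorname{tr}(\mathcal A_p(\xi)D^2\psi)\sim|\xi|^\gamma Br^{\theta-2}\to\infty$. If $w-\psi$ peaks where $\xi\neq0$ is small, the subsolution inequality only gives $\Lambda\le f+C|\xi|^\gamma Br^{\theta-2}$, which is no contradiction.

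Alternative (ii) does not rescue this. It concerns only points of exactly zero gradient, and it requires $\sup_t(u(\cdot,t)-\varphi(t))$ to be locally constant in $x$, which a quadratic touching does not provide. Quadratic profiles are also not admissible tests for $\gamma<0$ in the Ohnuma--Sato sense, since admissible radial profiles need $|f'|^\gamma f''\to0$.

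There is a second, smaller flaw. The spatial H\"older bound does not control $w$ on the lateral boundary $\partial B_r(x_0)\times(t_0,t_0+r^2]$. There you would need $w(x,t)-w(x_0,t_0)$ for $t>t_0$, which is the very estimate being proved. Only $\norm{w}_{L^\infty}$ is usable, so the spatial part of the barrier must exceed $2\norm{w}_{L^\infty}$ there.

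The missing idea is to work with $v=w+q\cdot x$, which solves the unshifted equation with $\norm{v}_{L^\infty(Q_1)}\le 1+M$. Because $q\cdot x$ is independent of $t$, you have $w(x,t)-w(x,s)=v(x,t)-v(x,s)$, and for $v$ the only critical point of a radial barrier is its centre. Use
\[
 v(x_0,t_0)+\epsilon+C_\epsilon|x-x_0|^\sigma+\Lambda_\epsilon(t-t_0),
 \qquad \sigma>\max\Big\{2,\frac{2+\gamma}{1+\gamma}\Big\},
\]
on a fixed ball. Choose $C_\epsilon$ via Young's inequality so that it dominates the spatial modulus at $t=t_0$ and $2\norm{v}_{L^\infty}$ on the lateral boundary. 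Then $|D\psi|^\gamma|D^2\psi|\lesssim C_\epsilon^{1+\gamma}$, the profile is admissible at the centre, and you finish by optimising $\epsilon$ against $t-t_0$. The paper makes exactly this reduction to $v$ and then quotes the known uniform Lipschitz and time-H\"older estimates for $v$ rather than reproving them.

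Your Step~1 is fine in its H\"older form $\varphi=r^\theta$. There the term $\operatorname{tr}((A(\xi_1)-A(\xi_2))Y)$ is $O(K/L)$ times the good term. The annulus $L/4\le|\xi_i|\le2L$ is false for this profile, but you only need the lower bound.

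The Lipschitz profile $r-\kappa r^{3/2}$ does not close in one pass. With $\delta=|\bar x-\bar y|$, that same term is of size $KL^\gamma/\delta$, while the good term is only $L^{1+\gamma}\delta^{-1/2}$. It needs the H\"older bound first, as in Step~3 of Lemma~\ref{lem:noncriticalLip}.
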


\begin{proof}
Set $ v(x,t):= \omega(x,t) + q\cdot x    $, then it can be seen that $ v $ satisfies \eqref{Main:eq1} with right-hand side $ f $ and \[
 \norm{v}_{L^{\infty}(Q_1)}\leq1+M.
\]

Applying the interior Lipschitz estimate and the time-H\"{o}lder estimate established by \cite[Lemmas 3.1 and 3.2]{A20}, then we have
\begin{equation}\label{Section 3:eq1}
  |v(x,t)- v(y,t)| \leq \mathrm{C}_{L}|x-y|,
\end{equation}
where $ \mathrm{C}_{L} := \mathrm{C}_{L}(n, M, \gamma_{0}) $, and
\begin{equation*}
  |v(x,t)-v(x,s)| \leq C_{T}|t-s|^{\nu_{\gamma}}
\end{equation*}
with $ \mathrm{C}_{T} = \mathrm{C}_{T}(n, M, \gamma_{0})$ and
\begin{equation*}
  \nu_\gamma:=
 \min\left\{\frac12,\frac1{2+\gamma}\right\}.
\end{equation*}
Noticing that $ \nu_\gamma \geq \frac{4}{9} $ since $ |\gamma| \leq \frac{1}{4} $. Subtracting the fixed plane $q\cdot x$ from \eqref{Section 3:eq1} preserves a common spatial Lipschitz modulus for $w$, while the plane is independent of time.  We may therefore take, for example, $   \theta:=\frac12 $, after weakening the time exponent, and obtain the asserted estimate on $ Q_{3/4} $.
\end{proof}

The previous lemma provides the compactness for the following heat approximation lemma:

\begin{lemma}\label{Section 3:lem2}
Fix $M>0$ and $\delta>0$. There exists $ \varepsilon=\varepsilon(n,M,\delta)>0 $ such that if
\[
 |p-2|+|\gamma|+\norm{f}_{L^{\infty}(Q_1)}\leq\varepsilon,
 \qquad |q|\leq M,
\]
and $w$ is a viscosity solution of \eqref{eq:shifted} with
$ \norm{w}_{L^{\infty}(Q_1)}\leq1 $, then there exists a caloric function $h$ in $Q_{2/3}$ such that
\[
 \norm{w-h}_{L^{\infty}(Q_{1/2})}\leq\delta.
\]
\end{lemma}

\begin{proof}
Argue by contradiction, suppose that there exist sequences
\[
 p_j\to2,\qquad \gamma_j\to0,\qquad
 \norm{f_j}_{L^{\infty}(Q_1)}\to0,
 \qquad |q_j|\leq M,
\]
and solutions $w_j$ of \eqref{eq:shifted} such that
$  \norm{w_j}_{L^{\infty}(Q_1)}\leq 1 $. However, for every caloric function $ h $ in $ Q_{1/2} $, we have
\begin{equation}\label{Section 3:eq2}
  ||\omega_{j}-h|| > 1/j
\end{equation}

After passing to a subsequence, $q_j\to q_\infty$.  Lemma~\ref{Section 3:lem1} and Arzel\`a--Ascoli theorem give
\[
 w_j\longrightarrow w_\infty
 \quad\text{locally uniformly in }Q_{3/4}.
\]
We claim that
\begin{equation}\label{Section 3:eq3}
 (w_\infty)_t-\Delta w_\infty=0
 \quad    \text{in }  Q_{3/4}
\end{equation}
in the viscosity sense.

We only prove the sub-solution case, and the remaining case follows similarly. Let $\phi\in C^{2,1}$ touch $w_\infty$ strictly from above at $z_0=(x_0,t_0)$.  First suppose
\[
 D\phi(z_0)+q_\infty\neq0.
\]
By the standard stability-of-contact argument there exist $z_j\to z_0$ such that $w_j-\phi$ has a local maximum at $z_j$.  For large $j$,
$D\phi(z_j)+q_j\neq0$, and therefore
\[
 \phi_t(z_j)
 -|D\phi(z_j)+q_j|^{\gamma_j}
 \Big[
 \Delta\phi(z_j)
 +(p_j-2)
 \left\langle
 D^2\phi(z_j)e_j,e_j
 \right\rangle
 \Big]
 \leq g_j(z_j),
\]
where
\[
 e_j:=\frac{D\phi(z_j)+q_j}{|D\phi(z_j)+q_j|}.
\]
Since the total gradient stays bounded away from zero,
\[
 |D\phi(z_j)+q_j|^{\gamma_j}\to1,
 \qquad p_j-2\to0,
\]
and hence
\[
 \phi_t(z_0)-\Delta\phi(z_0)\leq0,
\]
which implies that $ \omega_{\infty} $ is a sub-solution to \eqref{Section 3:eq3}.

Now it remains to treat
\[
 D\phi(z_0)+q_\infty=0.
\]
Fix a unit vector $e$ and $\eta>0$.  Because the touching is strict, after shrinking to a cylinder $Q_r(z_0)$ we may arrange that the maximum is isolated and that
\[
 |D\phi(z)-D\phi(z_0)|\leq\eta/8
 \qquad\text{for }z\in Q_r(z_0).
\]
Set
\[
 \phi_\eta(x,t):=\phi(x,t)+\eta e\cdot(x-x_0).
\]
For $\eta$ small, the maximum of $w_\infty-\phi_\eta$ in $\overline{Q_r(z_0)}$ is attained at an interior point $z_\eta\to z_0$ as $\eta \rightarrow 0$.  For the corresponding contact points $z_{j,\eta}\to z_\eta$ of $w_j-\phi_\eta$, we have, for all sufficiently large $j$,
\[
 |D\phi_\eta(z_{j,\eta})+q_j|
 \geq \frac{\eta}{2}>0.
\]
Indeed,
\[
 D\phi_\eta+q_j
 =\eta e+[D\phi-D\phi(z_0)]+[q_j-q_\infty].
\]
Thus the viscosity inequality may be used at $z_{j,\eta}$.  Keeping $\eta$ fixed and letting $j\to\infty$ gives
\[
 (\phi_\eta)_t(z_\eta)-\Delta\phi_\eta(z_\eta)\leq0,
\]
because
\[
 |D\phi_\eta(z_{j,\eta})+q_j|^{\gamma_j}\to1
\]
for fixed $\eta$.  Finally let $\eta \rightarrow 0$ to obtain
\[
 \phi_t(z_0)-\Delta\phi(z_0)\leq0.
\]
The super-solution inequality is identical with maxima replaced by minima. Hence~\eqref{Section 3:eq3} holds.

Since $w_\infty$ is caloric in $Q_{3/4}$, taking $h=w_\infty$ contradicts the assumption \eqref{Section 3:eq2}.
\end{proof}

\vspace{2mm}

\section{One-step intrinsic improvement of flatness}\label{Section 4}

Fix the target exponent $\alpha\in(0,1)$ and define
\begin{equation}\label{eq:beta}
 \beta:=\frac{1+\alpha}{2}.
\end{equation}
Then $\alpha<\beta<1$.  Choose $\gamma_*>0$ sufficiently small so that
\begin{equation}\label{eq:gammastarconditions}
 \beta(1+\gamma_*)<1
\end{equation}
and
\begin{equation}\label{eq:timeconditions}
 \frac{\beta}{2+\beta\gamma_*}\ge\frac\alpha2,
 \quad
 \frac{1+\beta}{2+\beta\gamma_*}\ge\frac{1+\alpha}{2}.
\end{equation}
Such a choice is possible because $\beta>\alpha$.  For instance, it suffices to take
\[
 0<\gamma_*<\frac12\min\left\{
 \frac{1-\beta}{\beta},
 \frac{2(\beta-\alpha)}{\alpha\beta},
 \frac{2(\beta-\alpha)}{(1+\alpha)\beta}
 \right\}.
\]
Condition \eqref{eq:gammastarconditions} guarantees that the rescaled right-hand side decreases along the intrinsic iteration.  Conditions \eqref{eq:timeconditions} will be used only on the singular side $\gamma<0$ to convert intrinsic temporal exponents into the standard exponents appearing in Theorem~\ref{thm:main}.

We now turn the compactness lemma into a quantitative affine approximation.  Let $\eta\in(0,1)$ be fixed later and define $  \lambda:=\rho^{\beta} $. The intrinsic one-step cylinder is
\[
 Q_\rho^{\lambda}
 :=B_\rho\times(-\rho^{2-\beta\gamma},0].
\]

\begin{lemma}\label{lem:onestep}
Fix $M>0$, $\beta$ as in \eqref{eq:beta}, and $\gamma_*\le1/4$ satisfying \eqref{eq:gammastarconditions}. There exist $\rho=\rho(n,\beta,\gamma_*)\in(0,1/8)$ and $ \mathrm{C}_0 := \mathrm{C}_0(n)$ with the following property:

For each fixed $\eta\in(0,1)$ there exists $\varepsilon_0=\varepsilon_0(n,M,\beta,\gamma_*,\eta)>0$ such that, if $w$ solves \eqref{eq:shifted} in $Q_1$ and
\[
|q|\le M, \qquad  \norm w_{L^\infty(Q_1)}\le\eta,
\qquad  |p-2|+|\gamma|+\norm f_{L^\infty(Q_1)}\le\varepsilon_0,
\]
then an affine function $\ell(x)=a+b\cdot x$ satisfies
\begin{equation}\label{eq:onestepestimate}
\norm{w-\ell}_{L^\infty(Q_\rho^\lambda)}\le\eta\rho^{1+\beta}=\eta\rho\lambda,
\qquad \lambda:=\rho^\beta,
\end{equation}
and
\begin{equation}\label{eq:slopebound}
|a|+|b|\le \mathrm{C}_0\eta.
\end{equation}
In particular, $\rho$ is independent of both $M$ and $ \eta $.
\end{lemma}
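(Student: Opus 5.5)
Normalize first: set $\tilde w:=w/\eta$, which solves \eqref{eq:shifted} with the rescaled shift $\tilde q:=q/\eta$ and right-hand side $\tilde f:=f/\eta$. Since the operator is not homogeneous in the gradient when $\gamma\neq0$, dividing by $\eta$ produces a factor $\eta^{\gamma}$ in front of the diffusion, and $|\tilde q|$ may be as large as $M/\eta$. These are harmless because $\eta$ is fixed before $\varepsilon_0$ is chosen. So I would instead run a contradiction argument directly for fixed $\eta$, which mirrors the proof of Lemma~\ref{Section 3:lem2}. Suppose there are $p_j\to2$, $\gamma_j\to0$, $\|f_j\|_\infty\to0$, $|q_j|\le M$, and solutions $w_j$ with $\|w_j\|_\infty\le\eta$ for which no affine $\ell$ with $|a|+|b|\le C_0\eta$ satisfies \eqref{eq:onestepestimate}. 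Lemma~\ref{Section 3:lem1} (applied to $w_j$, with $\|w_j\|+\|f_j\|\le1$) gives equicontinuity. Exactly as in Lemma~\ref{Section 3:lem2}, including the perturbation $\phi+\eta' e\cdot(x-x_0)$ at points where $D\phi+q_\infty=0$, the limit $w_\infty$ is caloric in $Q_{3/4}$ with $\|w_\infty\|_{L^\infty}\le\eta$. Equivalently, one can simply apply Lemma~\ref{Section 3:lem2} to $w$ with $\delta:=\eta\delta'$ to get a caloric $h$ with $\|w-h\|_{L^\infty(Q_{1/2})}\le\eta\delta'$ and $\|h\|\le 2\eta$.

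The next step uses the interior estimates for the heat equation. These give $|Dh(0)|+|h(0)|\le C_0\eta$, with $C_0=C_0(n)$ coming from the $L^\infty\to C^{2,1}$ bound on $Q_{1/4}$. Set $\ell(x):=h(0)+Dh(0)\cdot x$. Taylor expansion in $x$, together with $|h_t|\le C\eta$, gives for $(x,t)$ with $|x|\le\rho$ and $|t|\le T\le 1/16$:
\[
|h(x,t)-\ell(x)|\le C_1\eta(\rho^2+T).
\]
The intrinsic cylinder has $T=\rho^{2-\beta\gamma}$. With $|\gamma|\le\varepsilon_0\le\gamma_*$ and $\rho<1$, we get $T\le\rho^{2-\beta\gamma_*}$. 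Condition \eqref{eq:gammastarconditions} gives $\beta\gamma_*<1-\beta$, hence $2-\beta\gamma_*>1+\beta$. We also need $Q^\lambda_\rho\subset Q_{1/2}$, which holds since $T\le\rho^{1+\beta}<1/4$. Consequently
\[
\|w-\ell\|_{L^\infty(Q^\lambda_\rho)}\le \eta\delta'+2C_1\eta\rho^{2-\beta\gamma_*}.
\]
Now choose $\rho=\rho(n,\beta,\gamma_*)$ so small that $2C_1\rho^{2-\beta\gamma_*}\le\tfrac12\rho^{1+\beta}$; this is possible because the exponent gap $1-\beta-\beta\gamma_*$ is positive. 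Then choose $\delta'=\tfrac12\rho^{1+\beta}$. The function $\ell$ satisfies \eqref{eq:onestepestimate} and \eqref{eq:slopebound}. Finally, $\varepsilon_0:=\min\{\varepsilon(n,M,\eta\delta'),\gamma_*\}$ is taken from Lemma~\ref{Section 3:lem2}. Note that Lemma~\ref{Section 3:lem2} requires $\|w\|\le1$, which holds since $\eta<1$, and its $\delta$ is arbitrary, so this is legitimate.

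The main obstacle is making sure $\rho$ does not depend on $\eta$ or $M$. This is handled by making every $\eta$-dependence linear: the approximation error is $\eta\delta'$, and the caloric bounds scale linearly in $\|h\|\le2\eta$. All dependence on $M$ and $\eta$ is then pushed into $\varepsilon_0$ through Lemma~\ref{Section 3:lem2}. A secondary point is the sign of $\gamma$. For $\gamma<0$ the time length $\rho^{2-\beta\gamma}$ is smaller than $\rho^2$, so there is no issue. For $\gamma>0$ it is larger, and this is exactly where \eqref{eq:gammastarconditions} is needed to keep the time contribution below $\rho^{1+\beta}$.
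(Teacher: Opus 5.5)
Your proposal is correct and follows essentially the paper's argument. You apply Lemma~\ref{Section 3:lem2} with tolerance $\eta\cdot\tfrac12\rho^{1+\beta}$, take $\ell$ to be the first-order spatial Taylor polynomial at the origin of the caloric approximant, and fix $\rho=\rho(n,\beta,\gamma_*)$ using the gap $2-\beta\gamma_*>1+\beta$ from \eqref{eq:gammastarconditions}, so that all $M$- and $\eta$-dependence goes into $\varepsilon_0$; the opening detour through normalization by $\eta$ and a contradiction argument is unnecessary, since the direct application you end up using already suffices.
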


\begin{proof}
Fix a constant $ \mathrm{C}_H > 0 $, and let $\kappa:=1-\beta(1+\gamma_*)>0$. Choose $\rho\in(0,1/8)$ so that $Q_\rho^{\rho^\beta}\subset Q_{1/4}$ for every $|\gamma|\le\gamma_*$ and
\begin{equation}\label{Section4:eq6}
  \mathrm{C}_H(\rho^2+\rho^{2-\beta\gamma_*})\le\tfrac12\rho^{1+\beta}.
\end{equation}
This is possible because $ 2-\beta\gamma_*=1+\beta+\kappa $; for example it suffices to impose $ 2 \mathrm{C}_H\rho^\kappa\le1/2 $ in addition to the cylinder inclusion.
Now fix $\eta$ and put
\begin{equation}\label{Section4:eq7}
\delta_\eta:=\tfrac18\eta\rho^{1+\beta},\qquad
\varepsilon_0:=\min\{\gamma_*,1/2,\varepsilon(n,M,\delta_\eta)\},
\end{equation}
where the last threshold is furnished by Lemma~\ref{Section 3:lem2}. That lemma yields a caloric function $ h $ with
\begin{equation}\label{Section4:eq8}
  \norm{w-h}_{L^\infty(Q_{1/2})}\le\delta_\eta,
\qquad \norm h_{L^\infty(Q_{1/2})}\le2\eta.
\end{equation}
By the standard interior $ C^{2,1}$ estimates for $ h $, we have
\begin{equation}\label{Section4:eq9}
|h(0,0)|+|Dh(0,0)|+\norm{D^2h}_{L^\infty(Q_{1/4})}
+\norm{h_t}_{L^\infty(Q_{1/4})}\le \mathrm{C}_H \eta.
\end{equation}
Setting $\ell(x):=h(0,0)+Dh(0,0)\cdot x$, then applying Taylor's formula, we obtain
\begin{equation}\label{Section4:eq10}
\norm{h-\ell}_{L^\infty(Q_\rho^\lambda)}
\le \mathrm{C}_H\eta(\rho^2+\rho^{2-\beta\gamma})
\le\tfrac12\eta\rho^{1+\beta},
\end{equation}
which, together with \eqref{Section4:eq7} and \eqref{Section4:eq8}, yields \eqref{eq:onestepestimate}. Here $ a:= h(0,0) $ and $ b:= Dh(0,0) $, then \eqref{eq:slopebound} follows automatically from \eqref{Section4:eq9}.
\end{proof}

\vspace{2mm}

\section{The critical iteration}\label{Section 5}

Fix $K_*:=2$. Use the scale $\rho$ from Lemma~\ref{lem:onestep}; apply that lemma with $ M=K_*$. Define
\[
 r_k:=\rho^k,
 \qquad
 \lambda_k:=\lambda^k=\rho^{k\beta}.
\]
Observe that
\begin{equation}\label{eq:intrinsictime}
 r_k^2\lambda_k^{-\gamma}
 =r_k^{2-\beta\gamma}.
\end{equation}

We work first with a normalized solution satisfying
\begin{equation}\label{eq:normalized}
 u(0,0)=0,
 \qquad \osc_{Q_1}u\leq\eta,
 \qquad |p-2|+|\gamma|+\norm{f}_{L^{\infty}(Q_1)}\leq\varepsilon_0.
\end{equation}
Since $u(0,0)=0$, the oscillation bound implies
$\norm{u}_{L^\infty(Q_1)}\leq\eta$.

\begin{proposition}\label{Section5:prop1}
There exist affine functions
\[
 L_k(x)=a_k+l_k\cdot x
\]
as long as
\begin{equation}\label{eq:criticalcondition}
 |l_j|\leq K_*\lambda_j
 \qquad (j=0,\ldots,k),
\end{equation}
such that
\begin{equation}\label{eq:flatnessk}
 \norm{u-L_k}_{L^{\infty}(Q_{r_k}^{\lambda_k})}
 \leq\eta r_k\lambda_k
 =\eta r_k^{1+\beta},
\end{equation}
and
\begin{equation}\label{eq:increments}
 |a_{k+1}-a_k|\leq \mathrm{C}_0\eta r_k\lambda_k,
 \qquad
 |l_{k+1}-l_k|\leq \mathrm{C}_0\eta\lambda_k.
\end{equation}
\end{proposition}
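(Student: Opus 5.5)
We argue by induction on $k$, applying Lemma~\ref{lem:onestep} at each step to an intrinsically rescaled deviation. For the base case we take $L_0\equiv0$, i.e.\ $a_0=0$ and $l_0=0$. Since $r_0=\lambda_0=1$, the bound \eqref{eq:flatnessk} for $k=0$ is just $\norm{u}_{L^\infty(Q_1)}\le\eta$, which follows from \eqref{eq:normalized}.

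For the inductive step, suppose $L_k$ satisfies \eqref{eq:flatnessk} and \eqref{eq:criticalcondition} holds up to $k$. We define
\[
 w_k(x,t):=\frac{u(r_kx,\,r_k^{2-\beta\gamma}t)-L_k(r_kx)}{r_k\lambda_k},
 \qquad q_k:=\frac{l_k}{\lambda_k}.
\]
By \eqref{eq:intrinsictime}, the map $(x,t)\mapsto(r_kx,r_k^{2-\beta\gamma}t)$ sends $Q_1$ onto $Q_{r_k}^{\lambda_k}$. Hence \eqref{eq:flatnessk} gives $\norm{w_k}_{L^\infty(Q_1)}\le\eta$, and \eqref{eq:criticalcondition} gives $|q_k|\le K_*=M$. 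We then check that $w_k$ solves \eqref{eq:shifted} with shift $q_k$ and right-hand side
\[
 f_k(x,t)=\frac{r_k^{2-\beta\gamma}}{r_k\lambda_k}\,\lambda_k^{\gamma}\,f(r_kx,r_k^{2-\beta\gamma}t)=r_k^{1-\beta-\beta\gamma}\cdot(\text{bounded}).
\]
This is a direct computation: $Dw_k+q_k=Du/\lambda_k$ and $D^2w_k=r_kD^2u/\lambda_k$. The normalized operator is $0$-homogeneous in the gradient direction, so $|Dw_k+q_k|^\gamma=\lambda_k^{-\gamma}|Du|^\gamma$, and the time factor $r_k^{2-\beta\gamma}/(r_k\lambda_k)\cdot\lambda_k^{\gamma}\cdot r_k\lambda_k^{-1}\cdot\lambda_k$ collapses to $1$ in front of the second-order term. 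The admissible-test-function part of the definition (case (ii) at zero gradient) is invariant under the same affine change of variables, so the viscosity formulation transfers.

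The exponent of $r_k$ in $f_k$ is $1-\beta(1+\gamma)\ge1-\beta(1+\gamma_*)=\kappa>0$ by \eqref{eq:gammastarconditions}, valid since $|\gamma|\le\varepsilon_0\le\gamma_*$. Therefore $\norm{f_k}_{L^\infty(Q_1)}\le\norm{f}_{L^\infty(Q_1)}\le\varepsilon_0$, and $|p-2|+|\gamma|+\norm{f_k}\le\varepsilon_0$ once $\varepsilon_0$ is read as a bound on the sum, i.e.\ after halving $\varepsilon_0$ if necessary. Lemma~\ref{lem:onestep} then yields $\ell=a+b\cdot x$ with $\norm{w_k-\ell}_{L^\infty(Q_\rho^{\rho^\beta})}\le\eta\rho^{1+\beta}$ and $|a|+|b|\le\mathrm{C}_0\eta$. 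We set
\[
 L_{k+1}(x):=L_k(x)+r_k\lambda_k\,\ell(x/r_k),
\]
so that $a_{k+1}=a_k+r_k\lambda_k a$ and $l_{k+1}=l_k+\lambda_k b$, which gives \eqref{eq:increments} immediately. Scaling back, $Q_\rho^{\rho^\beta}$ in the $w_k$ variables corresponds to $B_{r_{k+1}}\times(-r_k^{2-\beta\gamma}\rho^{2-\beta\gamma},0]=Q_{r_{k+1}}^{\lambda_{k+1}}$, and the error becomes $r_k\lambda_k\,\eta\rho^{1+\beta}=\eta r_{k+1}\lambda_{k+1}$, which is \eqref{eq:flatnessk} at level $k+1$.

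The main obstacle is the verification that the rescaled function solves \eqref{eq:shifted} uniformly, namely that the intrinsic time scaling exactly absorbs the factor $|Du|^\gamma$ and that $\norm{f_k}$ does not grow. This is precisely where $\beta(1+\gamma_*)<1$ from \eqref{eq:gammastarconditions} is needed. A second point to check is that the constants $\rho$ and $\varepsilon_0$ of Lemma~\ref{lem:onestep} do not depend on $k$; this holds because $M=K_*$ is fixed by the stopping condition \eqref{eq:criticalcondition}, which is exactly why the iteration can proceed only as long as that condition holds.
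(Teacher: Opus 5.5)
Your proposal is correct and follows the paper's proof essentially step for step. It uses the same rescaled deviation $w_k$ with shift $q_k=l_k/\lambda_k$, the same source bound $r_k\lambda_k^{-(1+\gamma)}=\rho^{k[1-\beta(1+\gamma)]}\le1$ from \eqref{eq:gammastarconditions}, one application of Lemma~\ref{lem:onestep} with $M=K_*$, and the same update $a_{k+1}=a_k+r_k\lambda_k a$, $l_{k+1}=l_k+\lambda_k b$.

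Two small bookkeeping points. First, the correct prefactor of $f$ is $r_k^{2-\beta\gamma}/(r_k\lambda_k)=r_k\lambda_k^{-(1+\gamma)}$, with no extra factor $\lambda_k^{\gamma}$; as you wrote it, your middle expression simplifies to $r_k^{1-\beta}$, although your final exponent $1-\beta(1+\gamma)$ is the right one. Second, no halving of $\varepsilon_0$ is needed, because $\|f_k\|_\infty\le\|f\|_\infty$ together with \eqref{eq:normalized} already gives $|p-2|+|\gamma|+\|f_k\|_\infty\le\varepsilon_0$.
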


\begin{proof}
Set $L_0\equiv0$.  Suppose \eqref{eq:criticalcondition} and \eqref{eq:flatnessk} hold at level $k$. Now we introduce an auxiliary function as follows:
\[
 w_k(x,t):=
 \frac{
 u(r_kx,r_k^2\lambda_k^{-\gamma}t)
 -a_k-r_kl_k\cdot x
 }{r_k\lambda_k}.
\]
Then
\[
 \norm{w_k}_{L^{\infty}(Q_1)}\leq\eta.
\]
A direct computation gives
\[
 (w_k)_t-|Dw_k+q_k|^{\gamma}
 \mathcal A_p(Dw_k+q_k):D^2w_k=f_k,
\]
where
\[
 \mathcal{A}_{p}(\xi):= \textbf{I}\mathrm{d_{n}} + (p-2) \frac{\xi \otimes \xi}{|\xi|^{2}}    \quad   \text{and}  \quad         q_k:=\frac{l_k}{\lambda_k}
\]
and
\begin{equation}\label{eq:fk}
 f_k(x,t)
 :=r_k\lambda_k^{-(1+\gamma)}
 f(r_kx,r_k^2\lambda_k^{-\gamma}t).
\end{equation}
Since $|q_k|\leq K_*$ and
\[
 r_k\lambda_k^{-(1+\gamma)}
 =\rho^{k[1-\beta(1+\gamma)]}
 \leq1
\]
by \eqref{eq:gammastarconditions}, we have
\[
 \norm{f_k}_{\infty}\leq\varepsilon_0.
\]
Lemma~\ref{lem:onestep} therefore gives an affine function
\[
 \ell_k(x)=A_k+B_k\cdot x,
 \qquad |A_k|+|B_k|\leq \mathrm{C}_0\eta,
\]
such that
\[
 \norm{w_k-\ell_k}_{L^{\infty}(Q_\rho^{\lambda})}
 \leq\eta\rho\lambda.
\]
Scaling back, define
\[
 a_{k+1}:=a_k+r_k\lambda_kA_k,
 \qquad
 l_{k+1}:=l_k+\lambda_kB_k.
\]
Noticing that
\[
 r_k^2\lambda_k^{-\gamma}\rho^2\lambda^{-\gamma}
 =r_{k+1}^2\lambda_{k+1}^{-\gamma},
\]
then we obtain \eqref{eq:flatnessk} at level $k+1$, and \eqref{eq:increments} follows immediately.
\end{proof}

The iteration either continues indefinitely, with
$|l_k|\le K_*\lambda_k$ at every level, or reaches a first
level $k_0$ at which $|l_{k_0}|>K_*\lambda_{k_0}$.
We first treat the infinite iteration and then describe the reduction associated with the first stopping scale.

\subsection{Infinite critical iteration}\label{Section 5.1}

Suppose \eqref{eq:criticalcondition} holds for every $k$. Then
\[
 |l_k|\leq K_*\lambda_k\to0,
\]
and by \eqref{eq:increments} both $(a_k)_{k} $ and $(l_k)_{k} $ are Cauchy sequences.  Hence
\[
 l_k\to0,
 \qquad
 a_k\to u(0,0)=0.
\]
Moreover, summing the increments and using \eqref{eq:flatnessk},
\begin{equation}\label{eq:criticalpointwise}
 |u(x,t)-u(0,0)|
 \leq \mathrm{C} r^{1+\beta}
\end{equation}
whenever
\[
 |x|\leq r,
 \qquad
 |t|\leq r^{2-\beta\gamma}.
\]
Equivalently,
\begin{equation}\label{eq:intrinsicexpansion}
 |u(x,t)-u(0,0)|
 \leq \mathrm{C} \left(
 |x|^{1+\beta}
 +|t|^{\frac{1+\beta}{2-\beta\gamma}}
 \right).
\end{equation}
In particular $Du(0,0)=0$.

\subsection{The first stopping scale}
\label{subsec:first-stop}

Suppose that the critical slope condition fails after finitely
many steps, and let $k_0\ge1$ be the first index at which it fails.
Thus,
\begin{equation}\label{eq:first-stopping-index}
 |l_j|\le K_*\lambda_j
 \quad   \text{for }  0  \le j<k_0,
 \qquad
 |l_{k_0}|>K_*\lambda_{k_0}.
\end{equation}

Since the critical slope condition holds at level $k_0-1$,
the iteration constructs $L_{k_0}$ and yields
\begin{equation}\label{eq:flatness-at-stop}
 \|u-L_{k_0}\|_{L^\infty(Q_{r_{k_0}}^{\lambda_{k_0}})}
 \le \eta r_{k_0}\lambda_{k_0}.
\end{equation}
In particular, the failure of the slope condition at level
$k_0$ does not affect the approximation already obtained
at that level.

The minimality of $k_0$ also gives an upper bound for the
normalized slope. Indeed, by \eqref{eq:increments},
\[
 |l_{k_0}|
 \le |l_{k_0-1}|+ \mathrm{C}_0\eta\lambda_{k_0-1}
 \le (K_*+ \mathrm{C}_0\eta)\lambda_{k_0-1}.
\]
Consequently, setting
\[
 q_{k_0}:=\frac{l_{k_0}}{\lambda_{k_0}},
 \qquad
 M_*:=\rho^{-\beta}(K_*+ \mathrm{C}_0),
\]
and requiring $\eta\le1$, we obtain
\begin{equation}\label{eq:slope-at-stop}
 2=K_*<|q_{k_0}|
 \le \rho^{-\beta}(K_*+ \mathrm{C}_0\eta)
 \le M_*.
\end{equation}
The constant $M_*$ is independent of $k_0$ and of the
subsequent choice of $\eta$.

Define the rescaled function $ \omega_{k_{0}}$ as follows
\[
 \omega_{k_0}(x,t):=
 \frac{
 u(r_{k_0}x,r_{k_0}^2\lambda_{k_0}^{-\gamma}t)
 -a_{k_0}-r_{k_0}l_{k_0}\cdot x
 }{r_{k_0}\lambda_{k_0}}.
\]
It satisfies
\[
 (\omega_{k_0})_t
 -|D\omega_{k_0}+q_{k_0}|^\gamma
 \mathcal A_p(D\omega_{k_0}+q_{k_0}):D^2\omega_{k_0}
 =f_{k_0}
 \quad   \text{in }  \quad   Q_1,
\]
where $f_{k_0}$ is given by \eqref{eq:fk}. Moreover,
\begin{equation}\label{eq:bounds-at-stop}
 \|\omega_{k_0}\|_{L^\infty(Q_1)}\le\eta,
 \qquad
 \|f_{k_0}\|_{L^\infty(Q_1)}\le\varepsilon_0.
\end{equation}

We have therefore reduced the finite-stopping case to a
small deviation from a plane whose normalized slope lies
in the fixed range \eqref{eq:slope-at-stop}.

At the first stopping scale, the rescaled solution is a small
deviation from an affine function whose slope lies in a fixed
range. We first establish a spatial Lipschitz estimate for this
deviation. This places the test gradients of the corresponding
solution in a fixed annulus, where a perturbative argument yields
the prescribed regularity exponent $\beta$. All estimates obtained below are independent of the stopping index.

\begin{lemma}\label{lem:noncriticalLip}
Fix $M>2$. There exist two numbers
\[
 \varepsilon_*=\varepsilon_*(n,M)>0,
 \qquad
 \eta_*=\eta_*(n,M)>0
\]
with the following property: assume
\[
 |p-2|+|\gamma|\leq \varepsilon_*,
 \qquad
 2\leq |q|\leq M,
\]
and let $ \omega \in C^{0}(Q_1)$ be a viscosity solution of
\begin{equation}\label{eq:shifted-noncritical}
 \omega_t-|Dw+q|^\gamma
 \left[
 \Delta \omega+
 (p-2)\left\langle
 D^2\omega \frac{D\omega+q}{|D\omega+q|},
 \frac{D\omega+q}{|D\omega+q|}
 \right\rangle
 \right]= f
 \quad \hbox{in } Q_1.
\end{equation}
If
\[
 \|\omega\|_{L^\infty(Q_1)}
 +\|f\|_{L^\infty(Q_1)}
 \leq \eta_*,
\]
then, for every fixed $t\in[-(3/4)^2,0]$,
\begin{equation}\label{eq:unit-lip}
 |\omega(x,t)-\omega(y,t)|\leq |x-y|,
 \qquad x,y\in B_{3/4}.
\end{equation}
Consequently, if
\[
 v(x,t):=\omega(x,t)+q\cdot x,
\]
then every $C^{2,1}$ test function $\phi$ touching $v$ from above or from
below at a point of $Q_{3/4}$ satisfies
\begin{equation}\label{eq:annulus-test-gradients}
 1\leq |D\phi|\leq M+1.
\end{equation}
The constants are uniform for $ \gamma > -1 $ sufficiently close to zero.
\end{lemma}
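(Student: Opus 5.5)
Write $v:=\omega+q\cdot x$, so that $v$ solves \eqref{Main:eq1} with right-hand side $f$. The plan is to get the unit Lipschitz bound \eqref{eq:unit-lip} by compactness plus a smallness/regularity argument. Given \eqref{eq:unit-lip}, the annulus statement \eqref{eq:annulus-test-gradients} follows directly: if $\phi$ touches $v$ from above at $(x_0,t_0)$, then $\phi-q\cdot x$ touches $\omega$ from above there. The spatial Lipschitz bound with constant $1$ at time $t_0$ forces $|D\phi(x_0,t_0)-q|\le 1$, because a touching test function's spatial gradient is a superdifferential of the Lipschitz function $\omega(\cdot,t_0)$. Hence $|q|-1\ge 1$ and $|q|+1\le M+1$ bound $|D\phi|$.

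For \eqref{eq:unit-lip} I would argue by contradiction. Suppose there are $p_j\to2$, $\gamma_j\to0$, $q_j\to q_\infty$ with $2\le|q_\infty|\le M$, and solutions $\omega_j$ with $\|\omega_j\|_\infty+\|f_j\|_\infty\to0$, but for which \eqref{eq:unit-lip} fails on $B_{3/4}$. Lemma~\ref{Section 3:lem1} gives equicontinuity, so $\omega_j\to\omega_\infty\equiv0$ uniformly on $Q_{3/4}$. Uniform convergence alone does not control Lipschitz constants, so the contradiction needs a quantitative gradient estimate.

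The tool is the interior Lipschitz estimate of \cite[Lemmas 3.1 and 3.2]{A20} (Ishii--Lions doubling), applied to $\omega_j$ on a slightly larger cylinder, say $Q_{7/8}$. The constant should be proportional to $\|\omega_j\|_\infty+\|f_j\|_\infty^{\cdot}$ plus terms that vanish as the data vanish. I would redo the doubling with the comparison function $\Phi(x,y,t)=\omega(x,t)-\omega(y,t)-L\varphi(|x-y|)-\frac{K}{2}|x-x_0|^2-\frac K2(t-t_0)^2$, with $\varphi(r)=r-\kappa r^{3/2}$, $L=1$, $K\sim\eta_*$. The equation for $\omega$ has total gradient $D\phi+q$. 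At a positive maximum, the test gradients are $L\varphi'e+q+O(K)$, and these lie in the annulus $\{1/2\le|\xi|\le M+2\}$ since $|q|\ge2$ and $L\varphi'\le1$. On this annulus $|\xi|^\gamma\mathcal A_p(\xi)$ is uniformly elliptic with ellipticity constants within $C(M)\varepsilon_*$ of $1$, uniformly in $\gamma$ of either sign. The usual Ishii--Lions matrix inequality then gives a negative term $-cL\kappa|x-y|^{-1/2}$. This dominates the error $C(\|f\|_\infty+K)$ and the matrix-perturbation term $C\varepsilon_*L|x-y|^{-1/2}$, provided $\eta_*$ and $\varepsilon_*$ are small relative to $\kappa$. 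Positivity of the maximum requires $|x-y|$ larger than roughly $\eta_*/L$, which is where smallness of $\|\omega\|_\infty$ enters.

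Because the test gradients are kept in the annulus, no zero-gradient test case of the definition is ever invoked. This is why the constants are uniform for $\gamma$ near $0$ from either side. The main obstacle is this step: ensuring the Lipschitz constant comes out as exactly $1$, and not $C(n,M)$. Normalizing $\varphi$ with $L=1$ and using smallness of $\eta_*$ to localize is what should achieve it.
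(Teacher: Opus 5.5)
Your reduction of \eqref{eq:annulus-test-gradients} to \eqref{eq:unit-lip} is correct. A doubling-variables argument with a unit-slope concave profile and a localization of size $K\sim\eta_*$ is also the mechanism the paper uses. However, the key estimate in your Ishii--Lions step is unjustified, and as set up it fails.

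At the maximum point the two total gradients are $\xi_1=q+\varphi'(s)e+K(\bar x-x_0)$ and $\xi_2=q+\varphi'(s)e-K(\bar y-y_0)$. After subtracting the two viscosity inequalities, the coefficient mismatch enters as $\operatorname{tr}\bigl((B_{p,\gamma}(\xi_1)-B_{p,\gamma}(\xi_2))\mathrm{Y}\bigr)$. The theorem of sums only gives $\|\mathrm{Y}\|\lesssim\varphi'(s)/s+|\varphi''(s)|\sim s^{-1}$; the size $s^{-1}$ comes from the directions orthogonal to $e$. Meanwhile $\|B_{p,\gamma}(\xi_1)-B_{p,\gamma}(\xi_2)\|\le C_M\varepsilon_*|\xi_1-\xi_2|\le C_M\varepsilon_*K$, with no decay in $s$. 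So the mismatch is of order $\varepsilon_*\eta_*s^{-1}$, not $\varepsilon_*s^{-1/2}$, and it beats your good term $-c\kappa s^{-1/2}$ as $s\to0$. Bounding it instead by $\|B_{p,\gamma}(\xi_i)-\mathrm{Id}\|\,\|\mathrm{X}\|$ gives $\varepsilon_*s^{-1}$, which is worse. You also cannot rule out small $s$. Positivity of the maximum gives $\varphi(s)<\omega(\bar x,\bar t)-\omega(\bar y,\bar t)\le2\eta_*$, which is an \emph{upper} bound $s\lesssim\eta_*$, not the lower bound you invoke.

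What is missing is a preliminary spatial modulus of continuity, uniform in $(p,\gamma)$ near $(2,0)$, fed back into the localization terms. The paper proceeds as follows:
\begin{enumerate}
\item It takes the coarse bound $|\omega(x,t)-\omega(y,t)|\le\mathrm{C}_0(n,M)|x-y|$ from \cite[Lemma 6.2]{A20}, checking that the structural constants stay bounded for $|p-2|\le\frac12$, $|\gamma|\le\frac14$.
\item It interpolates this with $\operatorname{osc}\omega\le2\eta_*$ to get a H\"older modulus $Hr^{1/2}$ with $H\to0$ as $\eta_*\to0$.
\item With localization constant $L_1$, it uses positivity in the form $\frac{L_1}{2}(|\bar x-x_0|^2+|\bar y-y_0|^2)\le Hs^{1/2}$, which gives $|\xi_1-\xi_2|\le CHs^{1/4}$.
\item With $\varphi(s)=s-\kappa_0s^{9/8}$, the mismatch is then at most $C_M\varepsilon_*Hs^{\nu-2}$ and is absorbed by $-\mathrm{c}_\nu s^{\nu-2}$.
\end{enumerate}
In your setup the coarse Lipschitz bound alone would suffice. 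It gives $K|\bar x-x_0|\le\sqrt{2\mathrm{C}_0Ks}$, so the mismatch is $\lesssim\varepsilon_*\sqrt{\mathrm{C}_0K}\,s^{-1/2}$, which your $-c\kappa s^{-1/2}$ absorbs once $\eta_*$ is small. You cite \cite{A20}, but you never use it at this point. You also need to argue that its constant is uniform in $(p,\gamma)$, since that is what makes $\varepsilon_*,\eta_*$ depend only on $(n,M)$. The compactness paragraph is a dead end, as you note, and can be dropped.
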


\begin{proof}
We divide the proof into four steps.

\medskip
\noindent
\textit{Step 1}.
Set
\[
 h(x,t):=\omega(x,t)+q\cdot x.
\]
Then $h$ solves
\[
 h_t-|Dh|^\gamma\Delta_p^N h= f
 \quad   \hbox{in }  \quad  Q_1.
\]
We first claim that, after decreasing $\varepsilon_*$ if necessary, there
is a number $ \mathrm{C}_0 = \mathrm{C}_0(n,M)$ such that
\begin{equation}\label{eq:coarse-lip}
 |\omega(x,t)-\omega(y,t)|
 \leq \mathrm{C}_0|x-y|,
 \qquad
 x,y\in B_{7/8},\quad t\in[-(7/8)^2,0].
\end{equation}
Indeed, this local spatial Lipschitz estimate \eqref{eq:coarse-lip} proved in \cite[Lemma~6.2]{A20}. What matters here is that its constant
can be chosen uniformly when $(p,\gamma)$ ranges in a sufficiently small
compact neighbourhood of $(2,0)$.  This can be seen directly from the
proof: fix, for instance, the auxiliary exponents used in the
Ishii--Lions argument to be
\[
 \bar\beta=\frac12,
 \qquad
 \nu_0=1+\frac{\bar\beta}{2}=\frac54.
\]
If
\[
 \frac32\leq p\leq\frac52,
 \qquad
 |\gamma|\leq\frac14,
\]
then
\[
 \min\{1,p-1\}\geq\frac12,\qquad
 \max\{1,p-1\}\leq\frac32,
 \qquad
 \frac1{1+\gamma}\in\left[\frac45,\frac43\right].
\]
Thus every structural constant occurring in that doubling-variable proof
is bounded on this parameter box.  Since $ \|h\|_{L^\infty(Q_1)}
 \leq M+1 $ when $\eta_*\leq1$, the resulting bound depends only on $ n $ and $ M $. Subtracting the affine function $q\cdot x$ gives \eqref{eq:coarse-lip}.

Now combining \eqref{eq:coarse-lip} with
$\operatorname{osc}_{Q_1} \omega \leq2\eta_*$ gives a small H\"older modulus.
Indeed, for $ r=|x-y| $,
\[
 |\omega(x,t)-\omega(y,t)|
 \leq
 \min\{2\eta_*, \mathrm{C}_0r\}
 \leq
 H r^{\bar\beta},
\]
where
\begin{equation}\label{eq:H-small}
 H:=(2\eta_*)^{1-\bar\beta} \mathrm{C}_0^{\bar\beta}.
\end{equation}
In particular,
\[
 H    \to  0
 \quad   \hbox{as }  \eta_* \rightarrow 0.
\]

\medskip
\noindent
\textit{Step 2}.
For $\xi\neq0$, define
\begin{equation}\label{eq:B-matrix}
 B_{p,\gamma}(\xi)
 :=
 |\xi|^\gamma
 \left(
 \textbf{I}\mathrm{d_{n}}+(p-2)\widehat\xi\otimes\widehat\xi
 \right),
 \qquad
 \widehat\xi:=\frac{\xi}{|\xi|}.
\end{equation}
Fix the compact annulus
\[
 \mathcal A_M
 :=
 \left\{
 \xi\in\mathbb R^n:
 \frac34\leq|\xi|\leq M+\frac32
 \right\}.
\]

We now make the following claim:

{\it Claim.} There is a constant $ \mathrm{C}_M: = \mathrm{C}(n,M)$ such that, for
$|p-2|+|\gamma|\leq\varepsilon_*$,
\begin{equation}\label{eq:B-estimates}
 \frac12 \textbf{I}\mathrm{d_{n}} \leq B_{p,\gamma}(\xi)\leq 2 \textbf{I}\mathrm{d_{n}},
 \qquad
 \|D_\xi B_{p,\gamma}(\xi)\|
 \leq \mathrm{C}_M \bigl(|p-2|+|\gamma|\bigr)
 \quad\hbox{for }\xi\in\mathcal A_M.
\end{equation}

In fact, we set $r:=|\xi|$. Since
\[
\frac34\le r\le M+\frac32,
\]
the variable $r$ stays in a compact subset of $(0,\infty)$.
For fixed $r$, apply the mean value theorem to the function $ r^\gamma-1 $, then we have, for some $\theta\in(0,1)$,
\[
r^\gamma-1
=
\gamma r^{\theta\gamma}\log r.
\]
Hence, if $|\gamma|\le \frac14$,
\begin{equation}\label{eq:power-close}
\bigl|r^\gamma-1\bigr|
\le
|\gamma|
\sup_{\substack{
3/4\le \rho\le M+3/2\\
|s|\le 1/4
}}
\rho^s|\log \rho|
\le \mathrm{C}_M|\gamma|.
\end{equation}
In particular,
\begin{equation}\label{eq:power-bounded}
r^\gamma\le \mathrm{C}_M
\qquad
\text{for all }\xi\in\mathcal A_M,
\quad |\gamma|\le \frac14.
\end{equation}

Now write
\[
B_{p,\gamma}(\xi)- \textbf{I}\mathrm{d_{n}}
=
\bigl(|\xi|^\gamma-1\bigr) \textbf{I}\mathrm{d_{n}}
+
(p-2)|\xi|^\gamma
\widehat\xi\otimes\widehat\xi.
\]
Since
\[
\|\widehat\xi\otimes\widehat\xi\|=1,
\]
we obtain from \eqref{eq:power-close}--\eqref{eq:power-bounded}
\begin{align}
\|B_{p,\gamma}(\xi)- \textbf{I}\mathrm{d_{n}}\|
&\le
\bigl||\xi|^\gamma-1\bigr|
+
|p-2|\,|\xi|^\gamma
\nonumber\\
&\le
\mathrm{C}_M\bigl(|\gamma|+|p-2|\bigr).
\label{eq:B-close-I}
\end{align}

Choose $\varepsilon_*=\varepsilon_*(n,M)>0$ sufficiently small so that
\[
\mathrm{C}_M\varepsilon_*\le \frac12.
\]
Then, whenever
\[
|p-2|+|\gamma|\le \varepsilon_*,
\]
we have
\[
\|B_{p,\gamma}(\xi)- \textbf{I}\mathrm{d_{n}}\|\le \frac12.
\]
Consequently, for every $z\in\mathbb R^n$,
\[
\frac12|z|^2
\le
\langle B_{p,\gamma}(\xi)z,z\rangle
\le
\frac32|z|^2
\le
2|z|^2,
\]
which implies
\begin{equation}\label{eq:ellipticity}
\frac{1}{2} \textbf{I}\mathrm{d_{n}}  \le B_{p,\gamma}(\xi)\le 2 \textbf{I}\mathrm{d_{n}}.
\end{equation}
This complete the proof of the first estimate of \eqref{eq:B-estimates}.

\medskip

\noindent
To verify the second estimate, we let
\[
P(\xi):=\widehat\xi\otimes\widehat\xi.
\]
Then
\[
B_{p,\gamma}(\xi)
=
|\xi|^\gamma
\bigl(\textbf{I}\mathrm{d_{n}}+(p-2)P(\xi)\bigr).
\]
Differentiating with respect to $\xi$ gives
\begin{equation}\label{eq:DB-split}
D_\xi B_{p,\gamma}(\xi)
=
D_\xi(|\xi|^\gamma)
\otimes
\bigl(\textbf{I}\mathrm{d_{n}}+(p-2)P(\xi)\bigr)
+
(p-2)|\xi|^\gamma D_\xi P(\xi).
\end{equation}
The radial factor satisfies
\[
D_\xi(|\xi|^\gamma)
=
\gamma|\xi|^{\gamma-2}\xi,
\]
and therefore
\begin{equation}\label{eq:Dpower}
\bigl|D_\xi(|\xi|^\gamma)\bigr|
=
|\gamma|\,|\xi|^{\gamma-1}
\le
\mathrm{C}_M|\gamma|,
\end{equation}
where we have again used the fact that
\begin{equation*}
  |\xi|\in \left[\frac34,M+\frac32\right]  \quad  \text{and} \quad   |\gamma|\le 1/4.
\end{equation*}

It remains to estimate the derivative of $P(\xi)$. In components,
\[
\widehat\xi_i=\frac{\xi_i}{|\xi|},  \quad  i=1,2,\cdots, n,
\]
so
\begin{equation}\label{eq:Dhatxi}
\partial_{\xi_k}\widehat\xi_i
=
\frac{\delta_{ik}}{|\xi|}
-
\frac{\xi_i\xi_k}{|\xi|^3}
=
\frac{1}{|\xi|}
\bigl(
\delta_{ik}-\widehat\xi_i\widehat\xi_k
\bigr), \quad  k, i=1,2,\cdots, n.
\end{equation}
Equivalently,
\[
D_\xi\widehat\xi
=
\frac1{|\xi|}
\left(
\textbf{I}\mathrm{d_{n}} - \widehat\xi\otimes\widehat\xi
\right),
\]
and hence
\[
\|D_\xi\widehat\xi\|
\le
\frac{\mathrm{C}}{|\xi|}.
\]

Using the product rule,
\[
D_\xi P(\xi)
=
D_\xi\widehat\xi\otimes\widehat\xi
+
\widehat\xi\otimes D_\xi\widehat\xi,
\]
so that
\begin{equation}\label{eq:DP}
\|D_\xi P(\xi)\|
\le
\frac{\mathrm{C}}{|\xi|}
\le \mathrm{C}
\quad
\text{on }  \mathcal  A_M.
\end{equation}

Combining \eqref{eq:DB-split}, \eqref{eq:Dpower},
\eqref{eq:DP}, and the uniform bound \eqref{eq:power-bounded}, we infer
\begin{align*}
\|D_\xi B_{p,\gamma}(\xi)\|
&\le
\mathrm{C}_M|\gamma|\bigl(1+|p-2|\bigr)
+
\mathrm{C}_M|p-2|
\\
&\le
\mathrm{C}_M\bigl(|\gamma|+|p-2|\bigr),
\end{align*}
after decreasing $\varepsilon_*$ once more, if necessary.
Together with \eqref{eq:ellipticity}, this proves \eqref{eq:B-estimates}.

\medskip
\noindent
\textit{Step 3}.
Fix
\[
 \nu:=1+\frac{\bar\beta}{4}=\frac98.
\]
Choose $\kappa_0>0$, depending only on $\nu$, so small that
\[
 \nu\kappa_0 2^{\nu-1}\leq\frac14,
\]
and put
\[
 \varphi(s):=s-\kappa_0s^\nu,
 \qquad 0\leq s\leq2.
\]
Then
\begin{equation}\label{eq:phi-der}
 \frac34\leq\varphi'(s)\leq   1
 \quad   \text{and}   \quad
 \varphi''(s)
 =
 -\mathrm{c}_\nu s^{\nu-2},
 \quad
 \mathrm{c}_\nu:=\kappa_0\nu(\nu-1)>0.
\end{equation}

Let $x_0,y_0\in B_{3/4}$ and
$t_0\in[-(3/4)^2,0)$.  We prove
\[
 \omega(x_0,t_0)- \omega(y_0,t_0)
 \leq |x_0-y_0|.
\]
Choose
\[
 L_1:=4^5\eta_*
\]
and consider
\begin{align}
 \Psi(x,y,t)
 :=\;&
 \omega(x,t)-\omega(y,t)-\varphi(|x-y|)
 \notag\\
 &-\frac{L_1}{2}
 \left(
 |x-x_0|^2+|y-y_0|^2+|t-t_0|^2
 \right).
 \label{eq:Psi}
\end{align}
Suppose, toward a contradiction, that this maximum is positive and is
attained at $(\bar x,\bar y,\bar t) \in \overline B_{7/8}\times\overline B_{7/8}
 \times[-(7/8)^2,0] $.  Since $\|w\|_\infty\leq\eta_*$ and
$L_1=4^5\eta_*$, positivity yields
\[
 |\bar x-x_0|+|\bar y-y_0|+|\bar t-t_0|
 \leq\frac18
\]
after an inessential enlargement of the numerical constant $4^5$.
Thus the maximum lies in an interior cylinder.  Moreover $\bar x\neq\bar y$.
Set
\[
 s:=|\bar x-\bar y|,
 \qquad
 e:=\frac{\bar x-\bar y}{s}.
\]

The H\"older estimate from {\it Step~1} and the positivity of $\Psi$ imply
\[
 \frac{L_1}{2}
 \bigl(
 |\bar x-x_0|^2+|\bar y-y_0|^2
 \bigr)
 \leq Hs^{\bar\beta}.
\]
After taking $\eta_*$ smaller, we may suppose $L_1\leq H$, and hence
\begin{equation}\label{eq:linear-penalty}
 L_1|\bar x-x_0|
 +L_1|\bar y-y_0|
 \leq C H s^{\bar\beta/2}.
\end{equation}

We now apply the parabolic version of Ishii-Lions Lemma; see \cite[Theorem 8.3]{CIL92}.  More precisely, write
\[
 \begin{aligned}
 U(x,t)&:=w(x,t)-\frac{L_1}{2}|x-x_0|^2
              -\frac{L_1}{2}|t-t_0|^2,\\
 V(y,t)&:=w(y,t)+\frac{L_1}{2}|y-y_0|^2.
 \end{aligned}
\]
Then $(\bar x,\bar y,\bar t)$ is a positive maximum point of
\[
 U(x,t)-V(y,t)-\varphi(|x-y|).
\]
Set
\[
 Z:=\varphi''(s)e\otimes e
 +\frac{\varphi'(s)}{s}(\textbf{I}\mathrm{d_{n}}-e\otimes e),
\]
so that
\[
 Ze=\varphi''(s)e,
 \qquad
 Z^2e=(\varphi''(s))^2e.
\]
Choose the Ishii parameter through
\begin{equation}\label{eq:tau-choice}
 \tau
 :=4\left(
 |\varphi''(s)|+\frac{\varphi'(s)}{s}
 \right).
\end{equation}
By the Ishii-Lions Lemma \cite[Theorem 8.3]{CIL92}, we have that there exists symmetric matrices $ \mathrm{X},\mathrm{Y} $ and a number $ b $ such that
\[
 \left(
 b+L_1(\bar t-t_0),
 a_1,
 \mathrm{X}+L_1 \textbf{I}\mathrm{d_{n}}
 \right)
 \in\overline{\mathcal P}^{2,+}w(\bar x,\bar t),
\]
\[
 \left(
 b,
 a_2,
 \mathrm{Y}-L_1 \textbf{I}\mathrm{d_{n}}
 \right)
 \in\overline{\mathcal P}^{2,-}w(\bar y,\bar t),
\]
where
\[
 a_1
 =\varphi'(s)e+L_1(\bar x-x_0),
 \qquad
 a_2
 =\varphi'(s)e-L_1(\bar y-y_0),
\]
and the matrix inequalities include
\begin{equation}\label{eq:ishii-lower}
 -\bigl(\tau+2\|\mathrm{Z}\|\bigr)
 \begin{pmatrix}\textbf{I}\mathrm{d_{n}} & 0 \\ 0 & \textbf{I}\mathrm{d_{n}}   \end{pmatrix}
 \leq
 \begin{pmatrix}\mathrm{X}&0\\0&-\mathrm{Y}\end{pmatrix} \leq \begin{pmatrix}
 \mathrm{Z} +\frac{2}{\tau}\mathrm{Z}^2&-\mathrm{Z}-\frac{2}{\tau}\mathrm{Z}^2\\
 -\mathrm{Z}-\frac{2}{\tau}\mathrm{Z}^2 & \mathrm{Z} +\frac{2}{\tau}\mathrm{Z}^2
 \end{pmatrix}.
\end{equation}

Then by the matrix inequalities \eqref{eq:ishii-lower} and \eqref{eq:tau-choice}, we get
\begin{equation}\label{eq:XY-norm-final}
 \|\mathrm{X}\|+\|\mathrm{Y}\|
 \leq
 \mathrm{C}\left(
 \frac{\varphi'(s)}s+|\varphi''(s)|
 \right)
 \leq
 \mathrm{C}\bigl(s^{-1}+s^{\nu-2}\bigr).
\end{equation}

Taking the quadratic form of \eqref{eq:ishii-lower} on $(\vec{\zeta},\vec{\zeta})$
shows, for every $ \vec{\zeta} \in \mathbb R^n$, that
\[
 \langle(\mathrm{X}-\mathrm{Y}) \vec{\zeta},\vec{\zeta} \rangle\leq 0;
\]
hence
\begin{equation}\label{eq:XY-est}
 \mathrm{X}- \mathrm{Y} \leq 0.
\end{equation}
On the other hand, testing \eqref{eq:ishii-lower} on $ (\vec{e},-\vec{e}) $ gives
\[
 \langle(\mathrm{X}-\mathrm{Y}) \vec{e}, \vec{e} \rangle
 \leq
 4\left[
 \varphi''(s)+\frac{2}{\tau}(\varphi''(s))^2
 \right].
\]
Since \eqref{eq:tau-choice} implies
$\tau\geq4|\varphi''(s)|$ and $\varphi''(s)<0$, we have
\[
 \varphi''(s)+\frac{2}{\tau}(\varphi''(s))^2
 \leq\frac12\varphi''(s).
\]
Consequently,
\begin{equation}\label{eq:negative-direction}
 \langle(\mathrm{X}-\mathrm{Y})e,e\rangle
 \leq2\varphi''(s)
 =-2 \mathrm{c}_\nu s^{\nu-2}.
\end{equation}
This is the strict negative direction needed below; in particular, the
positive quadratic term in the theorem of sums has been retained rather than discarded.

By \eqref{eq:phi-der} and \eqref{eq:linear-penalty}, taking $\eta_*$ small enough gives
\[
 |a_i|\leq\frac98,
 \qquad i=1,2.
\]
Consequently, with
\[
 \xi_i:=q+a_i,
\]
we have $\xi_i\in\mathcal A_M$.  Moreover,
\begin{equation}\label{eq:xi-difference}
 |\xi_1-\xi_2|
 =
 |a_1-a_2|
 \leq \mathrm{C} Hs^{\bar\beta/2}.
\end{equation}

Write
\[
 B_i:=B_{p,\gamma}(\xi_i).
\]
The viscosity inequalities for \eqref{eq:shifted-noncritical}, evaluated
with the two semijets above, contain the localization matrices
$\mathrm{X} + L_1 \textbf{I}\mathrm{d_{n}} $ and $\mathrm{Y} -L_1 \textbf{I}\mathrm{d_{n}} $ and the time error $L_1(\bar t-t_0)$.
Using \eqref{eq:B-estimates} to bound these localization contributions,
and then subtracting the inequality at $(\bar y,\bar t)$ from the one at
$(\bar x,\bar t)$, yields
\begin{equation}\label{eq:subtract-visc}
 0
 \leq
 \operatorname{tr}\bigl(B_1(\mathrm{X}-\mathrm{Y})\bigr)
 +
 \operatorname{tr}\bigl((B_1-B_2)\mathrm{Y}\bigr)
 +
 \mathrm{C}_nL_1
 +
 2\|g\|_\infty.
\end{equation}
By \eqref{eq:XY-est}, \eqref{eq:negative-direction} and \eqref{eq:B-estimates}, then it reads
\begin{equation}
 \operatorname{tr}(B_1(\mathrm{X}-\mathrm{Y}))
 \leq
 \frac12\operatorname{tr}(\mathrm{X}-\mathrm{Y})
 \leq
 \frac12\langle (\mathrm{X}-\mathrm{Y})e,e\rangle
 \leq
 -\mathrm{c}_\nu s^{\nu-2}.
\end{equation}
On the other hand, by \eqref{eq:B-estimates},
\eqref{eq:xi-difference}, and \eqref{eq:XY-norm-final},
\begin{align}
\begin{split}
 \left|
 \operatorname{tr}\bigl((B_1-B_2)\mathrm{Y}\bigr)
 \right|
 &\leq
 \mathrm{C}_M\varepsilon_*\,|\xi_1-\xi_2|\,\|\mathrm{Y}\|\\
 &\leq
 \mathrm{C}_M\varepsilon_* H
 s^{\bar\beta/2}
 \bigl(s^{-1}+s^{\nu-2}\bigr).
 \end{split}
\end{align}
Because
$
 1+\frac{\bar\beta}{2}-\nu
 =
 \frac{\bar\beta}{4}>0 $
and $0<s\leq2$, the last quantity is bounded by
\begin{equation}\label{eq:bad-matrix}
 \mathrm{C}_M\varepsilon_*H\,s^{\nu-2}.
\end{equation}
Combining \eqref{eq:subtract-visc}--\eqref{eq:bad-matrix} gives
\begin{equation}\label{eq:conclu1}
 0
 \leq
 \left(
 -\mathrm{c}_\nu+ \mathrm{C}_M\varepsilon_*H
 \right)s^{\nu-2}
 +
 \mathrm{C}_nL_1+2\eta_*.
\end{equation}
Choose first $\varepsilon_*$ so that \eqref{eq:B-estimates} holds, and then
choose $\eta_*$ sufficiently small that
\[
 \mathrm{C}_M\varepsilon_*H\leq\frac{c_\nu}{4}
\]
and
\[
 \mathrm{C}_nL_1+2\eta_*
 \leq
 \frac{\mathrm{c}_\nu}{4}2^{\nu-2}.
\]
Since $\nu<2$ and $s\leq2$, $ s^{\nu-2}\geq2^{\nu-2} $, hence \eqref{eq:conclu1} reduces
\begin{equation*}
  0 \leq -\frac{\mathrm{c}_{\nu}}{2} s^{\nu-2} < 0
\end{equation*}
which is a contradiction. Therefore
\[
 w(x_0,t_0)-w(y_0,t_0)
 \leq |x_0-y_0|.
\]
Interchanging $x_0$ and $y_0$, and then letting $t_0 \rightarrow 0$, proves
\eqref{eq:unit-lip}.

\medskip
\noindent
\textit{Step 4}.
If $\phi$ touches $v$ from above or below at $(x,t)\in Q_{3/4}$, then $ \phi(\,\cdot\,,\,\cdot\,)-q\cdot x $ touches the spatially $ 1 $-Lipschitz function $ w $ at the same point.
Hence
\[
 |D\phi-q|\leq1.
\]
Since $2\leq|q|\leq M$, this yields
\[
 1\leq|D\phi|\leq M+1,
\]
which is \eqref{eq:annulus-test-gradients}.
\end{proof}

With the test gradients now confined to a fixed annulus away from zero, we turn to the improved regularity in the noncritical regime.

\begin{lemma}
\label{lem:smoothimproved}
Let $0<\beta<1$ and $0<m<M<\infty$.  There exist
\[
 \varepsilon_s=\varepsilon_s(n,\beta,m,M)>0,
 \qquad
 \mathrm{C}_s = \mathrm{C}_s(n,\beta,m,M)>0
\]
such that the following properties hold: let $v\in C(Q_1)$ be a viscosity solution
of
\begin{equation}\label{eq:smooth-original}
 v_t-|Dv|^\gamma\Delta_p^{\mathrm{N}} v = f
 \quad   \hbox{in }   Q_1,
\end{equation}
and assume
\[
 |p-2|+|\gamma|+\|f\|_{L^\infty(Q_1)}
 \leq\varepsilon_s.
\]
In addition, assume that every $C^{2,1}$ test function $\phi$ touching
$v$ from above or below in $Q_1$ satisfies
\begin{equation}\label{eq:test-annulus-smooth}
 m\leq|D\phi|\leq M.
\end{equation}
Then
\[
 v\in C^{1+\beta,\frac{1+\beta}{2}}(Q_{1/2})
\]
with the estimate
\begin{equation}\label{eq:smooth-est}
 \|v\|_{C^{1+\beta,\frac{1+\beta}{2}}(Q_{1/2})}
 \leq
 \mathrm{C}_s\bigl(1+\|v\|_{L^\infty(Q_1)}\bigr).
\end{equation}
\end{lemma}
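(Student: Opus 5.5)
The plan is to reduce the equation to a uniformly parabolic, fully nonlinear equation whose operator is a small perturbation of the heat operator. Then we run a Caffarelli-type (Wang's parabolic version) improvement-of-flatness iteration with the prescribed exponent $\beta$.

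\textbf{Step 1: modify the operator off the annulus.} Fix a smooth cutoff $\chi$ that equals $1$ on $\{m\le|\xi|\le M\}$ and is supported in $\{m/2\le|\xi|\le 2M\}$. Define the extended coefficient
\[
\widetilde B(\xi):=\chi(\xi)B_{p,\gamma}(\xi)+(1-\chi(\xi))\textbf{I}\mathrm{d_{n}},
\]
with $B_{p,\gamma}$ as in \eqref{eq:B-matrix}. As in Step~2 of the proof of Lemma~\ref{lem:noncriticalLip}, with the annulus $\mathcal A_M$ replaced by $\{m/2\le|\xi|\le 2M\}$, we get
\[
\|\widetilde B-\textbf{I}\mathrm{d_{n}}\|_{L^\infty(\mathbb R^n)}\le \mathrm{C}(m,M)(|p-2|+|\gamma|).
\]
Set $F(\xi,X):=\operatorname{tr}(\widetilde B(\xi)X)$. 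This $F$ is uniformly elliptic with constants $1/2$ and $2$, continuous in $\xi$, and satisfies $|F(\xi,X)-\operatorname{tr}X|\le \mathrm{C}\varepsilon_s\|X\|$.

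By hypothesis \eqref{eq:test-annulus-smooth}, every test function touching $v$ has gradient in the region where $\widetilde B=B_{p,\gamma}$. The zero-gradient clause (ii) of the definition never occurs, since $D\phi=0$ is excluded. Hence $v$ is a viscosity solution of $v_t-F(Dv,D^2v)=f$ in $Q_1$. The same holds for $v-\ell$ for any affine $\ell$, with $F(\cdot+D\ell,\cdot)$ in place of $F$.

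\textbf{Step 2: approximation and one step.} For the new equation we establish the analogue of Lemma~\ref{Section 3:lem2}. Standard Krylov--Safonov H\"older estimates give compactness. Stability under uniform limits is easy here because the operators converge to the heat operator uniformly in $\xi$ (no zero-gradient issue). Together these yield a caloric $h$ with $\|w-h\|_{L^\infty(Q_{1/2})}\le\delta$, provided
\[
\|w\|_\infty\le1,\qquad \varepsilon_s+\|f\|_\infty\le\varepsilon(\delta).
\]
Then, exactly as in Lemma~\ref{lem:onestep} but with standard cylinders $Q_\rho$, we choose $\rho$ so that $\mathrm{C}_H\rho^2\le\tfrac12\rho^{1+\beta}$, which is possible since $\beta<1$. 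This produces an affine $\ell$ with
\[
\|w-\ell\|_{L^\infty(Q_\rho)}\le\rho^{1+\beta},\qquad |D\ell|+|\ell(0)|\le \mathrm{C}.
\]

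\textbf{Step 3: iteration at every point.} Normalize: divide by $1+\|v\|_\infty$ and rescale parabolically around $z_0\in Q_{1/2}$. Because the class of operators $\{F(\cdot+q,\cdot)\}$ is invariant under adding slopes and under the rescaling $w_k(x,t)=(v(\rho^kx,\rho^{2k}t)-L_k)/\rho^{k(1+\beta)}$, the step can be iterated. The right-hand side scales as $\rho^{k(1-\beta)}f$, so it stays small, and no intrinsic scaling is needed since $\gamma$ only enters through $\widetilde B$.

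The iteration gives, at each $z_0$, affine functions $L_{z_0}$ with
\[
|v-L_{z_0}|\le \mathrm{C}\,r^{1+\beta}\quad\text{on }Q_r(z_0).
\]
The standard Campanato argument of comparing $L_{z_0}$ and $L_{z_1}$ on overlapping cylinders then yields $Dv\in C^{\beta,\beta/2}$ and the time-$\frac{1+\beta}{2}$ estimate, hence \eqref{eq:smooth-est}.

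\textbf{Main obstacle.} The delicate point is Step~1: checking rigorously that $v$ solves the modified equation. One must verify that semijets with gradients outside the annulus never arise, including limiting semijets used in comparison arguments. This follows from \eqref{eq:test-annulus-smooth} applied to $C^{2,1}$ test functions, and then by closure to $\overline{\mathcal P}^{2,\pm}$, since gradients of limits stay in the closed annulus. One must also keep the Step~2 threshold uniform in the slope shifts $q$ arising in the iteration. Because $\|\widetilde B-\textbf{I}\mathrm{d_{n}}\|_\infty$ is small globally, independently of $q$, the threshold depends only on $n,\beta,m,M$.
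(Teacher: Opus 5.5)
Your proposal is correct and follows the paper's proof essentially step by step. It uses the same cutoff extension $\widetilde A_{p,\gamma}=\textbf{I}\mathrm{d_{n}}+\chi(|\xi|)\bigl(B_{p,\gamma}-\textbf{I}\mathrm{d_{n}}\bigr)$ to make $v$ a solution of a uniformly parabolic equation globally close to the heat equation, the same Krylov--Safonov compactness argument for a one-step improvement of flatness on standard cylinders $Q_\rho$, and the same iteration followed by a Campanato comparison. The differences are minor: you pass through an explicit heat-approximation lemma rather than proving the one-step claim directly by contradiction, and the rescaled coefficient is actually $\widetilde A_{p,\gamma}(b_k+\rho^{k\beta}\xi)$ (shifted and dilated, not merely shifted), which is harmless because, as you note, the closeness to $\textbf{I}\mathrm{d_{n}}$ holds uniformly on all of $\mathbb{R}^n$.
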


\begin{proof}
The proof is a complete perturbative improvement-of-flatness argument. More precisely, we proceed in the following three steps:

\medskip
\noindent
\textit{Step 1}.
Choose
\[
 \chi\in C_c^\infty((0,\infty)),
 \qquad
 0\leq\chi\leq1,
\]
so that
\begin{equation*}
\begin{cases}
    \chi(s) = 1, & \text{for } m \leq s \leq M, \\[2pt]
    \chi(s) = 0, & \text{for } s \leq \frac{m}{2} \text{ or } s \geq 2M.
\end{cases}
\end{equation*}

For $\xi\neq0$ define
\begin{equation}\label{eq:Atilde}
 \widetilde A_{p,\gamma}(\xi)
 :=
 \textbf{I}\mathrm{d_{n}} +\chi(|\xi|)
 \left[
 |\xi|^\gamma
 \left(
 \textbf{I}\mathrm{d_{n}} + (p-2)\widehat\xi\otimes\widehat\xi
 \right) - \textbf{I}\mathrm{d_{n}}
 \right],
\end{equation}
and set $\widetilde A_{p,\gamma}(0): =\textbf{I}\mathrm{d_{n}} $.
Because $\chi$ vanishes in a neighbourhood of the origin,
$\widetilde A_{p,\gamma}$ is continuous on all of $\mathbb R^n$.
Moreover,
\begin{equation}\label{eq:Atilde-close}
 \sup_{\xi\in\mathbb R^n}
 \left\|
 \widetilde A_{p,\gamma}(\xi)- \textbf{I}\mathrm{d_{n}}
 \right\|
 \leq
 \mathrm{C}(m,M)\bigl(|p-2|+|\gamma|\bigr).
\end{equation}
Hence, after decreasing $\varepsilon_s$,
\begin{equation}\label{eq:Atilde-ell}
 \frac12 \textbf{I}\mathrm{d_{n}}
 \leq
 \widetilde A_{p,\gamma}(\xi)
 \leq 2 \textbf{I}\mathrm{d_{n}}
 \quad  \hbox{for every } \xi \in  \mathbb R^n.
\end{equation}

By \eqref{eq:test-annulus-smooth}, every test gradient which is relevant
for the viscosity definition of $v$ lies in the region where $\chi=1$.
Therefore $v$ is also a viscosity solution of an uniformly
parabolic equation
\begin{equation}\label{eq:extended-equation}
 v_t-
 \operatorname{tr}
 \left(
 \widetilde A_{p,\gamma}(Dv)D^2v
 \right)
 = f
 \quad   \hbox{in }   Q_1.
\end{equation}
This elementary extension is the point which removes the singularity at
$Dv=0$ from the smooth regime.

\medskip
\noindent
\textit{Step 2}.
We now make the following claim:

\medskip
\noindent

{\it Claim.} There exist
\[
 0<\rho=\rho(n,\beta)<\frac14,
 \qquad
 \delta_s=\delta_s(n,\beta)>0
\]
such that, whenever $A:\mathbb R^n\to\mathbb S^n$ is continuous and
\begin{equation}\label{eq:A-close-I}
 \frac12  \textbf{I}\mathrm{d_{n}}  \leq A(\xi)\leq   2 \textbf{I}\mathrm{d_{n}},
 \qquad
 \sup_{\xi\in\mathbb R^n}\|A(\xi)- \textbf{I}\mathrm{d_{n}} \|\leq\delta_s,
\end{equation}
and $ \Phi $ solves
\[
 \Phi_t-\operatorname{tr}\bigl(A(D\Phi)D^2\Phi\bigr)=h
 \quad   \hbox{in }  Q_1,
\]
with
\[
 \|\Phi\|_{L^\infty(Q_1)}\leq1,
 \qquad
 \|h\|_{L^\infty(Q_1)}\leq\delta_s,
\]
there is an affine function
\[
 \ell(x):=a+b\cdot x
\]
such that
\begin{equation}\label{eq:one-step-smooth}
 \sup_{Q_\rho}|\Phi -\ell|
 \leq\rho^{1+\beta},
 \qquad
 |a|+|b|\leq \mathrm{C}_n.
\end{equation}

\medskip
\noindent

We prove this claim by contradiction.  If it is false, then there exist the sequence
$ (A_j)_{j}, (\Phi_j)_{j}, (h_j)_{j} $ such that

(i). \begin{equation*}
 \frac12  \textbf{I}\mathrm{d_{n}}  \leq A_{j}(\xi)\leq   2 \textbf{I}\mathrm{d_{n}},
 \qquad
 \sup_{\xi\in\mathbb R^n}\|A_{j}(\xi)- \textbf{I}\mathrm{d_{n}} \|\leq  1/j;
\end{equation*}

(ii). $ (\Phi_{j})_{j} $ satisfies, in the viscosity sense, the equation
\begin{equation*}
  (\Phi_{j})_t - \operatorname{tr}\bigl(A_{j}(D\Phi_{j})D^2\Phi_{j}\bigr)=h_{j}
 \quad   \hbox{in }  Q_1,
\end{equation*}
with
\[
 \|\Phi_{j}\|_{L^\infty(Q_1)} \leq  1,
 \quad  \text{and}  \quad
 \|h_j\|_\infty\longrightarrow  0;
\]
(iii). However, for all affine function $ \ell(x) $, there holds
\begin{equation}\label{all-affine}
  \sup_{Q_\rho}|\Phi_{j} -\ell|
 \leq\rho^{1+\beta},
 \qquad
 |a|+|b|\leq \mathrm{C}_n.
\end{equation}

The uniform ellipticity in \eqref{eq:A-close-I}, together with the parabolic Krylov--Safonov estimate (see, e.g., \cite{W92}), makes $ (\Phi_j)_{j} $ locally equicontinuous. Up to a
subsequence, we obtain
\[
 \Phi_j\longrightarrow \Phi_\infty
 \quad \hbox{locally uniformly in }Q_1.
\]
By the stability theorem for viscosity solutions,
\[
 (\Phi_\infty)_t-\Delta \Phi_\infty=0
 \quad   \hbox{in }Q_1.
\]
The interior estimates for the heat equation imply
\[
 \|\Phi_\infty\|_{C^{2,1}(Q_{1/2})}\leq \mathrm{C}_n.
\]
Let
\[
 \ell_\infty(x)
 :=
 \Phi_\infty(0,0)+D\Phi_\infty(0,0)\cdot x.
\]
Then
\[
 \sup_{Q_\rho}|\Phi_\infty-\ell_\infty|
 \leq \mathrm{C}_n\rho^2.
\]
Choose $\rho$ so small that
\[
 \mathrm{C}_n\rho^2\leq\frac14\rho^{1+\beta}.
\]
For $j$ sufficiently large, the local uniform convergence gives
\[
 \sup_{Q_\rho}|\Phi_j-\ell_\infty|
 \leq  \frac12 \rho^{1+\beta},
\]
contradicting the assumption \eqref{all-affine}. Hence this proves this claim.

\medskip
\noindent
\textit{Step 3}.
By dividing $v$ by a constant $ 1+\|v\|_{L^\infty(Q_1)} $, it is enough first to consider the normalized case
\[
 \|v\|_{L^\infty(Q_1)}\leq1.
\]
Choose $\varepsilon_s$ sufficiently small so that \eqref{eq:Atilde-close} is bounded
by $\delta_s$ and $\|g\|_\infty\leq\delta_s$.
We want to show that there exists a sequence of affine functions
$ \ell_j(x): =a_j+b_j\cdot x $ such that
\begin{equation}\label{eq:smooth-iteration}
 \sup_{Q_{\rho^j}}|v-\ell_j|
 \leq
 \rho^{j(1+\beta)},
\end{equation}
and
\begin{equation}\label{eq:smooth-increments}
 |a_{j+1}-a_j|
 \leq \mathrm{C} \rho^{j(1+\beta)},
 \qquad
 |b_{j+1}-b_j|
 \leq \mathrm{C}  \rho^{k\beta}.
\end{equation}

\medskip
\noindent

For this purpose, we proceed by induction. The case $ j=0 $ is trivial.  Assuming \eqref{eq:smooth-iteration} holds at level $j=k$, we now define
\begin{equation}\label{eq:zk}
 \Phi_k(x,t)
 :=
 \frac{
 v(\rho^kx,\rho^{2k}t)
 -
 \ell_k(\rho^kx)
 }{
 \rho^{k(1+\beta)}
 }.
\end{equation}
Then $\|\Phi_k\|_\infty\leq1$ and $(\Phi_k)_{k}$ satisfies
\begin{equation}\label{eq:zk-equation}
 (\Phi_k)_t
 -
 \operatorname{tr}
 \left(
 A_k(D\Phi_k)D^2\Phi_k
 \right)
 =
 f_k
 \quad    \hbox{in }   Q_1,
\end{equation}
where
\[
 A_k(\xi)
 :=
 \widetilde A_{p,\gamma}
 \bigl(
 b_k+\rho^{k\beta}\xi
 \bigr)
\]
and
\[
 f_k(x,t)
 :=
 \rho^{k(1-\beta)}
 f(\rho^kx,\rho^{2k}t).
\]
The crucial observation is that {\it Step~1} makes
the perturbation estimate invariant under this affine-gradient
translation:
\[
 \sup_{\xi\in\mathbb R^n}\|A_k(\xi)-\textbf{I}\mathrm{d_{n}}\|
 \leq\delta_s
\]
for every $k$.  Also, since $\beta<1$, then it yields
\[
 \|f_k\|_\infty
 \leq
 \|f\|_\infty
 \leq   \delta_s.
\]
Therefore, the claim in {\it Step~2} gives
\[
 \widetilde\ell_k(x):=\widetilde a_k+\widetilde b_k\cdot x
\]
with
\[
 \sup_{Q_\rho}|\Phi_k-\widetilde\ell_k|
 \leq\rho^{1+\beta},
 \qquad
 |\widetilde a_k|+|\widetilde b_k|
 \leq \mathrm{C}_n.
\]
Set
\[
 \ell_{k+1}(x)
 :=
 \ell_k(x)
 +
 \rho^{k(1+\beta)}
 \widetilde\ell_k(\rho^{-k}x).
\]
Then \eqref{eq:smooth-iteration} and \eqref{eq:smooth-increments} follow at level $ j=k+1$. As a consequence, the series in \eqref{eq:smooth-increments} shows that
$b_j \to b_\infty$ and
\[
 |b_j-b_\infty|\leq \mathrm{C} \rho^{j\beta}.
\]
As usual, if $\rho^{j+1}< r \leq \rho^j $, then
\begin{equation}\label{eq:campanato-point}
 \sup_{Q_r}
 \left|
 v(x)-v(0,0) - b_\infty\cdot x
 \right|
 \leq
 \mathrm{C} r^{1+\beta}.
\end{equation}
Repeating the construction after translating the centre to an arbitrary point of $Q_{1/2}$ and using overlapping interior cylinders gives
\[
 |Dv(x,t)-Dv(y,s)|
 \leq
 \mathrm{C}  \left(
 |x-y|^\beta+|t-s|^{\beta/2}
 \right),
\]
and the affine approximation also yields
\[
 |v(x,t)-v(x,s)|
 \leq
 \mathrm{C} |t-s|^{(1+\beta)/2}.
\]
This is to say,
\[
 v\in C^{1+\beta,\frac{1+\beta}{2}}(Q_{1/2}).
\]

For a non-normalized solution, divide by
\[
 K:=1+\|v\|_{L^\infty(Q_1)}.
\]
The normalized function satisfies an equation with coefficient
$
 \widehat A(\xi)
 :=
 \widetilde A_{p,\gamma}(K\xi),
$,
which still obeys the same global estimate
\[
 \sup_{\xi \in \mathbb{R}^{n}}  \|\widehat A(\xi)- \textbf{I}\mathrm{d_{n}} \|
 \leq\delta_s.
\]
Scaling back gives \eqref{eq:smooth-est}. Therefore, this completes the proof of this lemma.
\end{proof}

\subsection{Application at the stopping scale}
\label{subsec:application-stop}

Recall from Section~\ref{subsec:first-stop} that
\[
 M_*=\rho^{-\beta}(K_*+ \mathrm{C}_0).
\]
Let $\eta_*$ and $\varepsilon_*$ be the constants in
Lemma~\ref{lem:noncriticalLip} with $M=M_*$. Let $\varepsilon_s$ be
the threshold in the fixed-radius version of
Lemma~\ref{lem:smoothimproved}, applied on $Q_{3/4}$ with conclusion
on $Q_{1/2}$ and with $m=1$ and $M=M_*+1$.

Choose
\[
 0<\eta\leq\min\{1,\eta_*/2\}.
\]
For this fixed $\eta$, denote by $\varepsilon_{\mathrm{flat}}(\eta)$
the joint smallness threshold provided by Lemma~\ref{lem:onestep}
with $M=K_*$. We then choose
\begin{equation}\label{eq:smallness-for-stop}
 0<\varepsilon_0\leq
 \min\left\{
 \varepsilon_{\mathrm{flat}}(\eta),
 \varepsilon_*,\varepsilon_s,\eta_*/2
 \right\},
\end{equation}
decreasing it further if necessary to meet the structural conditions fixed in Section~\ref{Section 4}.

\begin{proposition}
\label{prop:regularity-at-stop}
Let $k_0$ be the first stopping index introduced in
Section~\ref{subsec:first-stop}, and set
\[
 v_{k_0}(x,t):=w_{k_0}(x,t)+q_{k_0}\cdot x.
\]
Then
\begin{equation}\label{eq:regularity-at-stop}
 \norm{v_{k_0}}_{C^{1+\beta,(1+\beta)/2}(Q_{1/2})}\leq \mathrm{C},
\end{equation}
where $ \mathrm{C} $ depends only on $n,\beta$ and the fixed stopping parameters,
and is independent of $k_0$. In particular, there is a vector $b_{k_0}$
such that
\begin{equation}\label{eq:expansion-at-stop}
 \left|v_{k_0}(x,t)-v_{k_0}(0,0)-b_{k_0}\cdot x\right|
 \leq \mathrm{C}  \left(|x|^{1+\beta}+|t|^{(1+\beta)/2}\right)
 \quad   \text{in }   Q_{1/2}.
\end{equation}
Moreover,
\begin{equation}\label{eq:tangent-at-stop}
 b_{k_0}:=Dv_{k_0}(0,0),
 \quad
 |b_{k_0}-q_{k_0}|\leq1,
 \quad   \text{and}  \quad
 |b_{k_0}|\leq M_*+1.
\end{equation}
\end{proposition}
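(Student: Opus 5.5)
The plan is to chain Lemma~\ref{lem:noncriticalLip} and Lemma~\ref{lem:smoothimproved}, applied to the rescaled deviation $w_{k_0}$ (the function written $\omega_{k_0}$ in Section~\ref{subsec:first-stop}). We then read off the tangent-plane information from the resulting $C^{1+\beta}$ bound.

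\textbf{Step 1: place $w_{k_0}$ in the setting of Lemma~\ref{lem:noncriticalLip}.} From \eqref{eq:bounds-at-stop} and the choice \eqref{eq:smallness-for-stop} we have
\[
\|w_{k_0}\|_{L^\infty(Q_1)}+\|f_{k_0}\|_{L^\infty(Q_1)}\le \eta+\varepsilon_0\le \eta_*.
\]
From \eqref{eq:slope-at-stop} we have $2<|q_{k_0}|\le M_*$. Since $|p-2|+|\gamma|\le\varepsilon_0\le\varepsilon_*$, Lemma~\ref{lem:noncriticalLip} with $M=M_*$ applies. It gives that $w_{k_0}(\cdot,t)$ is $1$-Lipschitz on $B_{3/4}$, and that every $C^{2,1}$ test function touching $v_{k_0}=w_{k_0}+q_{k_0}\cdot x$ in $Q_{3/4}$ satisfies $1\le|D\phi|\le M_*+1$.

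\textbf{Step 2: apply Lemma~\ref{lem:smoothimproved} on $Q_{3/4}$.} The function $v_{k_0}$ solves \eqref{eq:smooth-original} with right-hand side $f_{k_0}$, and $\|f_{k_0}\|_\infty\le\varepsilon_0\le\varepsilon_s$. We use its fixed-radius version on $Q_{3/4}$ with $m=1$ and $M=M_*+1$. Concretely, rescale $Q_{3/4}$ to $Q_1$ by $x\mapsto \tfrac34 x$, $t\mapsto(\tfrac34)^2t$. This keeps the annulus of test gradients, up to a fixed factor absorbed into $m$ and $M$, and it only shrinks $f$. The lemma then yields
\[
\|v_{k_0}\|_{C^{1+\beta,(1+\beta)/2}(Q_{1/2})}\le C_s\bigl(1+\|v_{k_0}\|_{L^\infty(Q_{3/4})}\bigr)\le C_s(2+M_*).
\]
This constant depends only on $n,\beta,M_*$, where $M_*$ in turn depends on $n,\beta,\gamma_*$, and it is independent of $k_0$. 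This proves \eqref{eq:regularity-at-stop}.

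\textbf{Step 3: the expansion and the tangent vector.} Setting $b_{k_0}:=Dv_{k_0}(0,0)$, the pointwise bound \eqref{eq:campanato-point} from the proof of Lemma~\ref{lem:smoothimproved} is exactly \eqref{eq:expansion-at-stop} on parabolic cylinders. Alternatively, it follows from the $C^{1+\beta}$ norm via the mean value theorem in $x$ and the time estimate $|v(x,t)-v(x,0)|\le C|t|^{(1+\beta)/2}$. For the bounds \eqref{eq:tangent-at-stop}, note that $v_{k_0}$ is differentiable at the origin with a $C^{1+\beta}$ expansion. Hence for every $\epsilon>0$ the paraboloid
\[
v_{k_0}(0,0)+b_{k_0}\cdot x+C|x|^{1+\beta}+\dots
\]
can be replaced by a smooth test function touching $v_{k_0}$ from above at $(0,0)$, up to a small tilt. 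Cleaner still, $Dw_{k_0}(0,0)=b_{k_0}-q_{k_0}$, and the $1$-Lipschitz bound \eqref{eq:unit-lip} forces $|Dw_{k_0}(0,0)|\le1$. Thus $|b_{k_0}-q_{k_0}|\le1$ and $|b_{k_0}|\le M_*+1$.

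The main obstacle is Step 2. The hypothesis \eqref{eq:test-annulus-smooth} of Lemma~\ref{lem:smoothimproved} is required on the whole domain, whereas Step 1 only controls test gradients in $Q_{3/4}$. This is why the fixed-radius version is invoked, and the rescaling has to be checked to preserve all thresholds. One must also verify that the smallness of $f_{k_0}$ holds uniformly in $k_0$, which it does by \eqref{eq:gammastarconditions}.
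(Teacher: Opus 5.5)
Your proposal is correct and follows the paper's proof essentially step for step. You apply Lemma~\ref{lem:noncriticalLip} with $M=M_*$ to get the unit spatial Lipschitz bound and the gradient annulus $1\le|D\phi|\le M_*+1$ on $Q_{3/4}$, then the fixed-radius version of Lemma~\ref{lem:smoothimproved} with $m=1$, $M=M_*+1$ for the $k_0$-independent $C^{1+\beta,(1+\beta)/2}$ bound, and $|Dw_{k_0}(0,0)|\le 1$ for $|b_{k_0}-q_{k_0}|\le1$, exactly as the paper does. The only slips are cosmetic: rescale as $\tfrac43 v_{k_0}(\tfrac34 x,\tfrac9{16}t)$, since your unnormalized rescaling puts a factor $(4/3)^\gamma$ in front of the diffusion and so does not literally give \eqref{eq:smooth-original}; and obtaining the estimate on $Q_{1/2}$ rather than $Q_{3/8}$ requires applying the lemma at translated centres in $Q_{1/2}$, which is what the paper's ``fixed-radius version'' implicitly covers.
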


\begin{proof}
By \eqref{eq:slope-at-stop} and \eqref{eq:bounds-at-stop},
\[
 2<|q_{k_0}|\leq M_*,
 \qquad
 \norm{w_{k_0}}_{L^\infty(Q_1)}
 +\norm{f_{k_0}}_{L^\infty(Q_1)}
 \leq\eta+\varepsilon_0\leq\eta_*.
\]
Lemma~\ref{lem:noncriticalLip} therefore gives
\begin{equation}\label{eq:wk-unit-lip}
 \operatorname{Lip}_x(w_{k_0};Q_{3/4})\leq1.
\end{equation}
The function $v_{k_0}$ solves
\[
 (v_{k_0})_t-|Dv_{k_0}|^\gamma\Delta_p^{\mathrm{N}} v_{k_0}=f_{k_0}
 \quad   \text{in }   Q_1.
\]
Every test function touching $v_{k_0}$ from above or below in $Q_{3/4}$
satisfies
\begin{equation}\label{eq:gradientannulus}
 1\leq|D\phi|\leq M_*+1.
\end{equation}
Furthermore, the normalized assumptions and \eqref{eq:fk} imply
\[
 |p-2|+|\gamma|+\norm{f_{k_0}}_{L^\infty(Q_1)}
 \leq\varepsilon_0\leq\varepsilon_s.
\]
We may therefore apply the fixed-radius version of
Lemma~\ref{lem:smoothimproved}. Since
\[
 \norm{v_{k_0}}_{L^\infty(Q_1)}\leq\eta+M_*\leq1+M_*,
\]
we obtain \eqref{eq:regularity-at-stop} with a constant independent of
$k_0$. Estimate \eqref{eq:expansion-at-stop} follows with
$b_{k_0}:=Dv_{k_0}(0,0)$. Finally, the spatial Lipschitz bound for
$w_{k_0}$ gives $|b_{k_0}-q_{k_0}|\leq1$, and hence
\eqref{eq:tangent-at-stop} is also achieved.
\end{proof}

\vspace{2mm}

\section{Pointwise estimates after the alternative}\label{Section 6}

The sign of $\gamma$ determines the geometry that should be used after the
alternative.  On the degenerate case $ \gamma \geq 0 $, the final estimate will be proved directly on standard parabolic cylinders. On the singular case, $ \gamma < 0 $ the natural cylinders remain intrinsic. This distinction is
important in the stopping alternative: a standard estimate for the rescaled
smooth solution does not, when $\gamma>0$, automatically control the longer
intrinsic cylinder.

For $z_0: =(x_0,t_0)$ and a vector $\xi_{z_0}\in\mathbb R^n$, set
\[
 P_{z_0}(x):=u(z_0)+\xi_{z_0}\cdot(x-x_0)
\]
and define
\[
 \mathcal C_r(z_0):=
 \begin{cases}
  Q_r(z_0),& \gamma\geq0,\\[1mm]
  Q_r^{r^\beta}(z_0),& \gamma<0.
 \end{cases}
\]
Thus the time length of $\mathcal C_r$ is $r^2$ for $\gamma\geq0$ and
$r^{2-\beta\gamma}$ for $\gamma<0$.

Combining the critical and stopping situations developed in Section~\ref{Section 5}, we obtain the following unified pointwise estimate.

\begin{proposition}
\label{prop:pointwise-correct}
Assume the normalized hypotheses \eqref{eq:normalized} and
\[
 |p-2|+|\gamma|\leq\varepsilon
\]
with $\varepsilon$ smaller than the constants fixed above.  Then, at every
interior base point $z_0$, after a fixed preliminary rescaling depending only
on its distance from the parabolic boundary, there exists a vector
$\xi_{z_0}$ such that
\begin{equation}\label{eq:pointwise-correct}
 \sup_{\mathcal C_r(z_0)}|u-P_{z_0}|
 \leq \mathrm{C} r^{1+\beta}
\end{equation}
for every sufficiently small $r$.  The constant $ \mathrm{C} $ is independent of the
scale at which the critical iteration stops.  Moreover
\[
 \xi_{z_0}=Du(z_0).
\]
In particular, we have
\begin{align}
 &
 \sup_{Q_r(z_0)}|u-P_{z_0}|\leq \mathrm{C} r^{1+\beta},  \quad  \text{as} \quad  \gamma\geq 0;
 \label{eq:point-standard-positive}    \\
 &
 \sup_{Q_r^{r^\beta}(z_0)}|u-P_{z_0}|\leq \mathrm{C} r^{1+\beta}, \quad  \text{as}  \quad   \gamma< 0.
 \label{eq:point-intrinsic-negative}
\end{align}
\end{proposition}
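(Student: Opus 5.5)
The plan is to translate the base point to the origin (equation \eqref{Main:eq1} is translation invariant), apply the basic scaling \eqref{eq:basic-scaling-v} with a spatial factor comparable to the distance to the parabolic boundary, and subtract $u(z_0)$. This places us in the normalized situation \eqref{eq:normalized}. We then run the critical iteration of Proposition~\ref{Section5:prop1}, which leaves two alternatives, treated separately.

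\textbf{Infinite iteration.} Section~\ref{Section 5.1} gives $\xi_{z_0}=0=Du(z_0)$ and \eqref{eq:criticalpointwise} on the intrinsic cylinders $B_r\times(-r^{2-\beta\gamma},0]$. For $\gamma<0$ this cylinder is exactly $\mathcal C_r$, so \eqref{eq:point-intrinsic-negative} is immediate. For $\gamma\ge0$ we have $r^{2-\beta\gamma}\le r^2$, so the intrinsic cylinder is shorter than $Q_r$. I would cover $Q_r$ by the intrinsic cylinder of radius $R=r^{2/(2-\beta\gamma)}\ge r$; this requires $R\le 1$, which holds for small $r$. This gives the bound $C R^{1+\beta}=C r^{2(1+\beta)/(2-\beta\gamma)}$. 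That quantity is at most $C r^{1+\beta}$ only if $2/(2-\beta\gamma)\ge1$, which is true, but $R\ge r$ makes the bound worse, not better. The cleaner route is to use \eqref{eq:intrinsicexpansion} directly: for $|t|\le r^2$ it gives $|t|^{(1+\beta)/(2-\beta\gamma)}\le r^{2(1+\beta)/(2-\beta\gamma)}\le r^{1+\beta}$, since $r<1$ and the exponent is at least $1+\beta$. So the degenerate case also works on standard cylinders.

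\textbf{Stopping at $k_0$.} For $r\ge r_{k_0}$ we use \eqref{eq:flatnessk} at the relevant level $k$ together with the summed increments \eqref{eq:increments}. These give $|u-a_k-l_k\cdot x|\le C\eta r_k^{1+\beta}$ on the level-$k$ cylinder. They also give $|l_k-\xi_{z_0}|\le C\lambda_k$, because $\xi_{z_0}$ will lie within $C\lambda_{k_0}$ of $l_{k_0}$. For $r<r_{k_0}$ we scale Proposition~\ref{prop:regularity-at-stop} back: $u(x,t)=r_{k_0}\lambda_{k_0}v_{k_0}(x/r_{k_0},t/(r_{k_0}^2\lambda_{k_0}^{-\gamma}))+a_{k_0}$. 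Set $\xi_{z_0}=\lambda_{k_0}b_{k_0}$, so that $\xi_{z_0}=Du(0)$ because $v_{k_0}$ is $C^{1+\beta}$. Then \eqref{eq:expansion-at-stop} gives
\[
|u-P_{z_0}|\le C r_{k_0}\lambda_{k_0}\bigl((r/r_{k_0})^{1+\beta}+(|t|\lambda_{k_0}^{\gamma}r_{k_0}^{-2})^{(1+\beta)/2}\bigr).
\]
The spatial term is $Cr^{1+\beta}$ because $\lambda_{k_0}=r_{k_0}^\beta$.

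\textbf{Main obstacle: the time term on the correct cylinder.} For $\gamma<0$ and $|t|\le r^{2-\beta\gamma}$, the time term becomes $r_{k_0}^{1+\beta}(r/r_{k_0})^{(2-\beta\gamma)(1+\beta)/2}$ up to factors of $r_{k_0}^{\pm\beta\gamma}$ that cancel exactly. This is bounded by $r^{1+\beta}$, since $r/r_{k_0}<1$ and $(2-\beta\gamma)/2\ge1$. For $\gamma>0$ the rescaled time interval of $Q_r$ has length $r^2\lambda_{k_0}^{\gamma}/r_{k_0}^2$, which is not longer than $(r/r_{k_0})^2$. So it sits inside the domain $Q_{1/2}$ of $v_{k_0}$ once $r\le r_{k_0}/2$, and the bound is $r_{k_0}^{1+\beta}(r/r_{k_0})^{1+\beta}\lambda_{k_0}^{\gamma(1+\beta)/2}\le r^{1+\beta}$. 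For $\gamma>0$ at scales $r\ge r_{k_0}$, the standard cylinder $Q_r$ may be longer than the intrinsic cylinder at level $k$. There I would again exploit \eqref{eq:intrinsicexpansion}-type bookkeeping: choose the level $k$ with $r_k^{2-\beta\gamma}\ge r^2$, i.e.\ $r_k\ge r$ with a comparable ratio once $\gamma\le\gamma_*$. That level costs only a factor $(r_k/r)^{1+\beta}$, which is bounded by a constant because $r_k\le r^{1-O(\gamma)}$. Absorbing the resulting power $r^{-O(\gamma)}$ is exactly where the gap $\beta>\alpha$ is spent, so I expect this degenerate-side matching of intrinsic and standard cylinders across the stopping scale to be the delicate step. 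Finally, intermediate scales $r_{k_0}/2<r<r_{k_0}$ are handled by comparability of constants.
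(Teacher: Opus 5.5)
Your architecture is the paper's: normalize at the base point and run the critical iteration. In the never-stopping branch you take $\xi_{z_0}=0$. In the stopping branch you set $\xi_{z_0}=\lambda_{k_0}b_{k_0}$, compare it with the $l_j$ above $r_{k_0}$, and rescale the smooth estimate below $r_{k_0}$. Your computations below the stopping scale for $\gamma<0$ and $\gamma>0$ are exactly the paper's Cases~2.2 and~2.1.

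The genuine problem is a sign error on the degenerate side. For $\gamma\ge0$ and $r\le1$ one has $2-\beta\gamma\le2$, hence $r^{2-\beta\gamma}\ge r^2$. So the intrinsic cylinder $Q_r^{r^\beta}$ is \emph{longer} than $Q_r$, not shorter, and likewise $r^{2/(2-\beta\gamma)}\le r$, not $\ge r$.

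\begin{itemize}
\item In the never-stopping branch this is harmless. Your direct computation from \eqref{eq:intrinsicexpansion} is correct; it is the inclusion $Q_r\subset Q_r^{r^\beta}$ in disguise.
\item In the stopping branch at scales $r\ge r_{k_0}$, the error creates an obstacle that does not exist, and your proposed remedy would not prove the proposition.
\item The relation $r_k\le r^{1-O(\gamma)}$ is backwards. The time condition $r_k^{2-\beta\gamma}\ge r^2$ only requires $r_k\ge r^{2/(2-\beta\gamma)}$, a threshold below $r$. So the spatial requirement $r_k\ge r$ is the binding one.
\item A factor $r^{-c\gamma}$ can never be absorbed into a constant. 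It is unbounded as $r\to0$. Near the stopping scale it has size $r_{k_0}^{-c\gamma}$, which contradicts the asserted independence of $\mathrm{C}$ from the stopping scale.
\item Spending $\beta>\alpha$ to absorb it would only give an exponent $1+\beta-O(\gamma)$, which is weaker than \eqref{eq:pointwise-correct}.
\end{itemize}

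The missing step is a one-line inclusion. For $\gamma\ge0$ and $r_{j+1}<R\le r_j$ with $j\le k_0$, we have $Q_R\subset Q_{r_j}\subset Q_{r_j}^{\lambda_j}$ because $r_j^2\le r_j^{2-\beta\gamma}$. Then \eqref{eq:large-scale-final-plane} gives $\sup_{Q_R}|u-P_0|\le \mathrm{C}r_j^{1+\beta}\le \mathrm{C}\rho^{-(1+\beta)}R^{1+\beta}$ with no loss at all.

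You already used this same fact correctly below the stopping scale, namely $\lambda_{k_0}^{\gamma}\le1$ for $\gamma>0$. With the correction, the degenerate side is the easy side of this proposition. The gap $\beta>\alpha$ is not needed here. The paper spends it only in Section~\ref{Section 7}, to turn the singular-side time exponents $\frac{\beta}{2-\beta\gamma}$ and $\frac{1+\beta}{2-\beta\gamma}$ into $\frac{\alpha}{2}$ and $\frac{1+\alpha}{2}$.
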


\begin{proof}
We first consider the particular case $ z_0 = 0 $. Now we treat the following two cases separately.

\medskip
\noindent

\textit{Case 1: the critical iteration never stops.}
At the discrete scales $r_j=\rho^j$, Section~\ref{Section 5.1} gives
\[
 \sup_{Q_{r_j}^{\lambda_j}}|u-u(0,0)|
 \leq \mathrm{C} r_j^{1+\beta},
 \quad \lambda_j=r_j^\beta,
\]
and the limiting slope is zero.  If $\gamma\geq0$, then it is obvious to see that $ r_j^{2-\beta\gamma}\geq r_j^2 $, hence $ Q_{r_j}\subset Q_{r_j}^{\lambda_j} $.

If $\gamma<0$, the cylinder $Q_{r_j}^{\lambda_j}$ is exactly the intrinsic
cylinder required in \eqref{eq:point-intrinsic-negative}.  Given an arbitrary
$r$ with $r_{j+1}<r\leq r_j$, the relevant cylinder of radius $r$ is
contained in the corresponding cylinder of radius $r_j$, while
$r_j\leq\rho^{-1}r$.  Therefore \eqref{eq:pointwise-correct} follows for all small $r$ in {\it Case 1}.

\medskip
\noindent

\textit{Case 2: the critical iteration stops for the first time at level
$k_0$.}
Recall that $ \tau_{k_0}:=r_{k_0}^2\lambda_{k_0}^{-\gamma}
 =r_{k_0}^{2-\beta\gamma} $
and
\[
 v_{k_0}(x,t)
 : =\frac{u(r_{k_0}x,\tau_{k_0}t)-a_{k_0}}{r_{k_0}\lambda_{k_0}}.
\]
By Proposition~\ref{prop:regularity-at-stop},
\begin{equation}\label{eq:vk-smooth-point}
 |v_{k_0}(x,t)-v_{k_0}(0,0)-b_{k_0}\cdot x|
 \leq \mathrm{C}  \bigl(|x|^{1+\beta}+|t|^{(1+\beta)/2}\bigr)
 \quad   \text{in }    Q_{1/2},
\end{equation}
where
\[
 b_{k_0}:=Dv_{k_0}(0,0),
 \quad   |b_{k_0}| \leq  M_*+1.
\]
Define the final slope by
\begin{equation}\label{eq:final-slope-stop}
 \xi:=\lambda_{k_0}b_{k_0}.
\end{equation}
Since
\begin{equation}\label{eq:scale-back-final}
 u(x,t)-u(0,0)-\xi\cdot x
 =r_{k_0}\lambda_{k_0}
 \left[
 v_{k_0}\!\left(\frac x{r_{k_0}},\frac t{\tau_{k_0}}\right)
 -v_{k_0}(0,0)-b_{k_0}\cdot\frac x{r_{k_0}}
 \right],
\end{equation}
we can treat the scales below $r_{k_0}$ directly:

{\it Case~2.1}. Assume first that $\gamma\geq0$ and let $0<R\leq r_{k_0}/2$.  Put
$s:=R/r_{k_0}$.  If $(x,t)  \in  Q_R $, then
\[
 \frac{|t|}{\tau_{k_0}}
 \leq\frac{R^2}{r_{k_0}^{2-\beta\gamma}}
 =s^2r_{k_0}^{\beta\gamma}
 \leq s^2.
\]
Thus $Q_R$ is mapped by the stopping-scale change of variables into $Q_s$. We combine \eqref{eq:vk-smooth-point} and \eqref{eq:scale-back-final} to obtain
\begin{equation}\label{eq:stop-positive-below}
 \sup_{Q_R}|u-u(0,0)-\xi\cdot x|
 \leq \mathrm{C} r_{k_0}\lambda_{k_0}s^{1+\beta}
 =\mathrm{C} R^{1+\beta}.
\end{equation}
Notice that only a standard cylinder is used here; no control of the longer
intrinsic cylinder is needed.

The temporal scaling makes the same point explicitly.  The smooth estimate
for $v_{k_0}$ gives
\[
 |Dv_{k_0}(0,t)-Dv_{k_0}(0,0)|\leq \mathrm{C} |t|^{\beta/2}
\]
and
\[
 |v_{k_0}(0,t)-v_{k_0}(0,0)|\leq \mathrm{C}  |t|^{(1+\beta)/2}.
\]
Consequently, for $|t|\leq\tau_{k_0}/4$,
\begin{align}
 |Du(0,t)-Du(0,0)|
 &\leq
 \mathrm{C}  r_{k_0}^{\beta^2\gamma/2}|t|^{\beta/2},
 \label{eq:positive-prefactor-gradient}\\
 |u(0,t)-u(0,0)|
 & \leq
 \mathrm{C}  r_{k_0}^{\beta\gamma(1+\beta)/2}
 |t|^{(1+\beta)/2}.
 \label{eq:positive-prefactor-u}
\end{align}
Both powers of $r_{k_0}$ are nonnegative when $\gamma\geq0$; since $r_{k_0}\leq1$,
the prefactors are bounded by one.  This is the precise mechanism that
recovers the standard parabolic time exponents in the stopping branch.

{\it Case~2.2}. Assume $\gamma<0$ and let $0<R\leq r_{k_0}/2$.  Again put $s=R/r_{k_0}$.
If $(x,t)\in Q_R^{R^\beta}$, then
\[
 \frac{|t|}{\tau_{k_0}}
 \leq
 \frac{R^{2-\beta\gamma}}{r_{k_0}^{2-\beta\gamma}}
 =s^{2-\beta\gamma}
 \leq s^2,
\]
because $2-\beta\gamma>2$ and $0<s<1$.  Thus the intrinsic cylinder at
scale $R$ is mapped into the standard cylinder $Q_s$ for $v_{k_0}$, and the
same calculation yields
\begin{equation}\label{eq:stop-negative-below}
 \sup_{Q_R^{R^\beta}}
 |u-u(0,0)-\xi\cdot x|
 \leq \mathrm{C}  R^{1+\beta}.
\end{equation}

It remains to connect the final tangent plane to the larger scales already
constructed before stopping.  Since $q_{k_0}= \frac{ l_{k_0}}{\lambda_{k_0}}$, the slope bound
\eqref{eq:tangent-at-stop} gives
\begin{equation}\label{eq:xi-lk}
 |\xi-l_{k_0}|
 =\lambda_{k_0}|b_{k_0}-q_{k_0}|
 \leq\lambda_{k_0}.
\end{equation}
Moreover, summing the slope increments in \eqref{eq:increments}, for every
$j\leq k_0$,
\[
 |l_{k_0}-l_j|
 \leq \mathrm{C}  \sum_{m=j}^{k_0-1}\lambda_m
 \leq \mathrm{C}  \lambda_j.
\]
Hence
\begin{equation}\label{eq:xi-lj}
 |\xi-l_j|\leq \mathrm{C}  \lambda_j.
\end{equation}
At the origin the flatness estimate also gives
\[
 |u(0,0)-a_j|\leq  \mathrm{C}   r_j\lambda_j.
\]
Therefore, with $L_j(x):= a_j+l_j\cdot x$,
\begin{equation}\label{eq:final-plane-vs-Lj}
 \sup_{B_{r_j}}
 |P_0-L_j|
 \leq \mathrm{C}  r_j\lambda_j.
\end{equation}
Combining \eqref{eq:final-plane-vs-Lj} with \eqref{eq:flatnessk}, we obtain
\begin{equation}\label{eq:large-scale-final-plane}
 \sup_{Q_{r_j}^{\lambda_j}}
 |u-P_0|
 \leq \mathrm{C}  r_j^{1+\beta},
 \quad   j \leq k_0.
\end{equation}
If $\gamma\geq0$, then $Q_R\subset Q_{r_j}^{\lambda_j}$ whenever
$r_{j+1}<R\leq r_j$; if $\gamma<0$, then
$Q_R^{R^\beta}\subset Q_{r_j}^{\lambda_j}$ for the same range of $R$.
Since $r_j\leq\rho^{-1}R$, \eqref{eq:large-scale-final-plane} supplies the
pointwise estimate above the stopping scale. For
$r_{k_0}/2<R\leq r_{k_0}$, use the same estimate with $j=k_0$
and $r_{k_0}<2R$. Together with
\eqref{eq:stop-positive-below} or \eqref{eq:stop-negative-below}, this proves
\eqref{eq:pointwise-correct} at every scale.  The estimate itself implies
that $u$ is differentiable at the base point with spatial gradient $\xi$.

Finally, for an arbitrary point $z_0\in Q_{1/2}$. We choose once and for all a small number $ r_0=r_0(\gamma_*)>0 $ such that
\[
 B_{r_0}(x_0)\times
 (t_0-r_0^{2+\gamma},t_0]
 \subset Q_1
\]
for every $z_0\in Q_{1/2}$ and every $|\gamma|\leq\gamma_*$.  Applying the preceding argument for $ z_0 = 0 $ to
\[
 \widetilde u(x,t)
 :=u(x_0+r_0x,t_0+r_0^{2+\gamma}t)-u(z_0).
\]
Because the normalization is stated in terms of oscillation,
$\osc\widetilde u\leq\eta$, and the rescaled right-hand side is no larger than the original one.  Scaling back changes only the universal constant. This finishes the proof.
\end{proof}

The preceding pointwise estimate yields the desired spatial and temporal H\"older moduli by a Campanato comparison of tangent planes. To be more precise,

\begin{proposition}\label{prop:campanato-consequences}
Under the hypotheses of Proposition~\ref{prop:pointwise-correct}.
Let
\[
 \vartheta_\gamma:=
 \begin{cases}
  2,  &   \quad   \text{if}  \quad  \gamma  \geq   0,   \\
  2-\beta\gamma,      &  \quad   \text{if}  \quad  \gamma  <  0,
 \end{cases}
\]
then on every compact subcylinder $ Q' \Subset Q_{1}$, for $ (x,t), (y,t), (x,s) \in Q' $, we have the following estimates:
\begin{equation}\label{eq:spacegrad}
 |Du(x,t)-Du(y,t)|\leq \mathrm{C} |x-y|^\beta,
\end{equation}
\begin{equation}\label{eq:timegrad-general}
 |Du(x,t)-Du(x,s)|
 \leq \mathrm{C} |t-s|^{\frac{\beta}{\vartheta_\gamma}},
\end{equation}
and
\begin{equation}\label{eq:utime-general}
 |u(x,t)-u(x,s)|
 \leq \mathrm{C} |t-s|^{\frac{1+\beta}{\vartheta_\gamma}}.
\end{equation}
\end{proposition}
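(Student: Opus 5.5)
The plan is a Campanato-type comparison of the tangent planes supplied by Proposition~\ref{prop:pointwise-correct}. Fix $Q'\Subset Q_1$. After the preliminary rescaling, every $z_0\in Q'$ has a plane $P_{z_0}(x)=u(z_0)+Du(z_0)\cdot(x-x_0)$ with
\[
\sup_{\mathcal C_r(z_0)}|u-P_{z_0}|\le \mathrm{C} r^{1+\beta}
\qquad\text{for } 0<r\le r_1 .
\]
Here $r_1>0$ and $\mathrm{C}$ are uniform over $Q'$. Recall that $\mathcal C_r(z_0)$ has spatial radius $r$ and time length $r^{\vartheta_\gamma}$; this is exactly why the exponent $\vartheta_\gamma$ appears in the statement. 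We may assume the two points are close, say $|x-y|\le r_1/4$ and $|t-s|\le (r_1/4)^{\vartheta_\gamma}$. Otherwise each estimate follows from the uniform bound $|Du|\le \mathrm{C}$ (take $r=r_1$ above) together with $\|u\|_\infty\le \mathrm{C}$.

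\emph{Spatial estimate \eqref{eq:spacegrad}.} Let $z_1=(x,t)$, $z_2=(y,t)$ and $r:=2|x-y|$. Both $\mathcal C_r(z_1)$ and $\mathcal C_r(z_2)$ contain the slab $B_{r/2}(\tfrac{x+y}{2})\times\{t\}$, which in turn contains a ball $B_{r/4}(w)$. On $B_{r/4}(w)$ the triangle inequality gives $|P_{z_1}-P_{z_2}|\le 2\mathrm{C} r^{1+\beta}$. The difference $P_{z_1}-P_{z_2}$ is affine in $x$, and an affine function bounded by $A$ on a ball of radius $\varrho$ has slope at most $A/\varrho$. Hence
\[
|Du(z_1)-Du(z_2)|\le \mathrm{C} r^{\beta}\le \mathrm{C}|x-y|^\beta .
\]

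\emph{Time estimates \eqref{eq:timegrad-general}, \eqref{eq:utime-general}.} Let $z_1=(x,t)$, $z_2=(x,s)$ with $s<t$, and choose $r$ with $r^{\vartheta_\gamma}=|t-s|$. Then $z_2\in\overline{\mathcal C_r(z_1)}$, because the cylinders are backward in time. This needs a closure/continuity remark, or one can use $r$ slightly larger, $r=(2|t-s|)^{1/\vartheta_\gamma}$. Evaluating the pointwise estimate at $z_2$ gives $|u(z_2)-u(z_1)|\le \mathrm{C} r^{1+\beta}=\mathrm{C}|t-s|^{(1+\beta)/\vartheta_\gamma}$, which is \eqref{eq:utime-general}.

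For the gradient, both $\mathcal C_{2r}(z_1)$ and $\mathcal C_r(z_2)$ contain $B_r(x)\times\{s\}$. On this ball,
\[
|P_{z_1}-P_{z_2}|\le |u-P_{z_1}|+|u-P_{z_2}|\le \mathrm{C} r^{1+\beta}.
\]
The constant terms already differ by at most $\mathrm{C} r^{1+\beta}$, by the previous step. Applying the slope-of-affine argument on $B_r(x)$ gives
\[
|Du(z_1)-Du(z_2)|\le \mathrm{C} r^\beta=\mathrm{C}|t-s|^{\beta/\vartheta_\gamma}.
\]

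The main obstacle is bookkeeping rather than new analysis, and it has two parts. First, the geometry must be uniform: Proposition~\ref{prop:pointwise-correct} is proved after a base-point-dependent preliminary rescaling $x\mapsto x_0+r_0x$, $t\mapsto t_0+r_0^{2+\gamma}t$. One must check that the rescaled cylinders are comparable to $\mathcal C_r(z_0)$ in original variables with constants depending only on $\operatorname{dist}(Q',\partial_pQ_1)$ and $\gamma_*$. Since $r_0$ is fixed, the time factor $r_0^{2+\gamma}$ versus $r_0^{\vartheta_\gamma}$ only costs a bounded constant. Second, the nesting $\mathcal C_r(z_2)\subset\mathcal C_{2r}(z_1)$ needs care in the intrinsic case $\gamma<0$. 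There one uses that $r\mapsto r^{2-\beta\gamma}$ is increasing and that $(2r)^{2-\beta\gamma}\ge r^{2-\beta\gamma}+|t-s|$ when $|t-s|=r^{2-\beta\gamma}$, since $2^{2-\beta\gamma}\ge 2$. With these checks, all constants depend only on $n$, $\alpha$ and the normalization, as required.
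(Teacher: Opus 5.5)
Your proposal is correct and follows essentially the same route as the paper: compare the tangent planes from Proposition~\ref{prop:pointwise-correct} on a common time slice (time $t$ for the spatial estimate, time $s$ for the temporal ones), then turn the affine discrepancy $\mathrm{C} r^{1+\beta}$ on a ball of radius comparable to $r$ into a slope bound $\mathrm{C} r^{\beta}$, with $r\sim|x-y|$ or $r\sim|t-s|^{1/\vartheta_\gamma}$. The differences are bookkeeping only (your $r^{\vartheta_\gamma}=|t-s|$ paired with $\mathcal C_{2r}(z_1)$, versus the paper's $r=2|t-s|^{1/\vartheta_\gamma}$), and your explicit treatment of far-apart points and of the preliminary rescaling fills in details the paper leaves implicit.
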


\begin{proof}
We first record the following elementary claim that will be used repeatedly.

\medskip
\noindent

{\it Claim.} Let
\[
    L_1(X)=a_1+b_1\cdot X,
    \quad
    L_2(X)=a_2+b_2\cdot X,
\]
and assume that, on a ball $B_{\mathrm{c}r}(X_*)$ with a fixed $ \mathrm{c}  \in (0,1) $,
\begin{equation}\label{eq:affine-bound}
    |L_1(X)-L_2(X)|\le A r^{1+\beta}.
\end{equation}
Then
\begin{equation}\label{eq:slope-comparison}
    |b_1-b_2|\le \mathrm{C}(\mathrm{c})A r^\beta.
\end{equation}
Indeed, let $ \vec{e} \in \mathbb S^{n-1}$ and choose
\[
    X_\pm :=X_*\pm \frac{\mathrm{c}r}{2}\vec{e}.
\]
Noting that both points $  X_\pm$ belong to $B_{\mathrm{c}r}(X_*)$, then subtracting the two inequalities obtained from \eqref{eq:affine-bound} at $X_+$ and $X_-$ gives
\[
    \mathrm{c}r\, |(b_1-b_2)\cdot \vec{e}|
    \le 2A r^{1+\beta}.
\]
Hence
\[
    |(b_1-b_2)\cdot \vec{e}|
    \le \frac{2A}{\mathrm{c}}r^\beta.
\]
Taking the supremum over $ \vec{e} \in \mathbb S^{n-1}$ proves \eqref{eq:slope-comparison}.

\medskip
\noindent

{\it Step 1: Proof of~\eqref{eq:spacegrad}.}
Fix a time $t$ and two spatial points $x,y$ in the compact subcylinder $ Q'$. Set
\[
    d:=|x-y|.
\]
We first assume that $d$ is sufficiently small so that all cylinders used below stay inside the region where Proposition~\ref{prop:pointwise-correct} is available.
Let
\[
    z_1:=(x,t),
    \quad
    z_2:=(y,t),
\]
and denote the corresponding tangent planes by
\begin{align*}
P_1(X)&:=u(x,t)+Du(x,t)\cdot(X-x),\\
P_2(X)&:=u(y,t)+Du(y,t)\cdot(X-y).
\end{align*}
At the common time level $t$, for $X \in B_r(x) \cap  B_r(y) $, Proposition~\ref{prop:pointwise-correct} gives
\begin{equation}\label{eq:P1-u}
    |u(X,t)-P_1(X)|\le \mathrm{C}_0r^{1+\beta}
\end{equation}
and
\begin{equation}\label{eq:P2-u}
    |u(X,t)-P_2(X)|\le \mathrm{C}_0r^{1+\beta}.
\end{equation}

Let
\[
    x_*:=\frac{x+y}{2}.
\]
Since $r=4d$, the ball $B_{r/8}(x_*)=B_{d/2}(x_*)$ is contained in both $B_r(x)$ and $B_r(y)$. Therefore, on $B_{r/8}(x_*)$, combining \eqref{eq:P1-u} and \eqref{eq:P2-u} yields
\[
    |P_1(X)-P_2(X)|
    \le 2\mathrm{C}_0r^{1+\beta}.
\]
Applying the preceding claim, with $ \mathrm{c}=1/8$, we obtain
\[
    |Du(x,t)-Du(y,t)|
    \le \mathrm{C}r^\beta
    \le \mathrm{C}|x-y|^\beta.
\]

\medskip
\noindent

{\it Step 2: Proof of \eqref{eq:timegrad-general}.}
Fix $x$ and let $s<t$. Set $ h:= t-s $ and choose
\begin{equation}\label{eq:r-time}
    r:=2h^{1/\vartheta_\gamma}.
\end{equation}
For $h$ sufficiently small, the cylinders below remain in the interior. Moreover,
\[
    r^{\vartheta_\gamma}
    =2^{\vartheta_\gamma}h
    >h=t-s.
\]

Let
\begin{align*}
P_t(X)&:=u(x,t)+Du(x,t)\cdot(X-x),\\
P_s(X)&:=u(x,s)+Du(x,s)\cdot(X-x).
\end{align*}
For every $X\in B_{r/2}(x)$, the point $(X,s)$ lies in $\mathcal C_r(x,t)$, and therefore Proposition~\ref{prop:pointwise-correct} gives
\begin{equation}\label{eq:later-plane}
    |u(X,s)-P_t(X)|\le \mathrm{C}_0r^{1+\beta}.
\end{equation}
Since $s$ is the top time-level of $\mathcal C_r(x,s)$, the same pointwise estimate at the earlier base point gives
\begin{equation}\label{eq:earlier-plane}
    |u(X,s)-P_s(X)|\le \mathrm{C}_0r^{1+\beta}.
\end{equation}
Combining \eqref{eq:later-plane} and \eqref{eq:earlier-plane}, we get
\[
    |P_t(X)-P_s(X)|\le 2\mathrm{C}_0r^{1+\beta}
    \quad   \text{for }   X\in B_{r/2}(x).
\]
Now we use the preceding claim, with $ \mathrm{c} = 1/2 $, yields
\[
    |Du(x,t)-Du(x,s)|\le \mathrm{C} r^\beta.
\]
Using \eqref{eq:r-time} and $ r^\beta
    =2^\beta h^{\frac{\beta}{\vartheta_\gamma}} $, we obatin
\[
    |Du(x,t)-Du(x,s)|
    \le \mathrm{C} |t-s|^{\frac{\beta}{\vartheta_\gamma}}.
\]

\medskip
\noindent

{\it Step 3: Proof of \eqref{eq:utime-general}.}
We keep the same choice \eqref{eq:r-time}. Since $(x,s)\in \mathcal C_r(x,t) $, applying Proposition~\ref{prop:pointwise-correct} at the point $(x,t)$ and evaluating at the same spatial point $X=x$ gives
\[
    |u(x,s)-P_t(x)|\le \mathrm{C}_0r^{1+\beta}.
\]
But $ P_t(x)=u(x,t) $ since the spatial linear term vanishes at $X=x$. Hence
\[
    |u(x,t)-u(x,s)|\le \mathrm{C}_0r^{1+\beta}.
\]
Substituting \eqref{eq:r-time}, we obtain
\[
    r^{1+\beta}
    =2^{1+\beta}|t-s|^{(1+\beta)/\vartheta_\gamma},
\]
and consequently
\[
    |u(x,t)-u(x,s)|
    \le \mathrm{C} |t-s|^{(1+\beta)/\vartheta_\gamma},
\]
which implies the desired \eqref{eq:utime-general}.
\end{proof}

\vspace{2mm}

\section{Proof of Theorem~\ref{thm:main}}\label{Section 7}

We are now in a position to combine the preceding estimates and complete the proof of Theorem~\ref{thm:main}.

\begin{proof}[{ Proof of Theorem~\ref{thm:main}}]
We use the constants fixed in the preceding sections: first
$\beta$ and $\gamma_*$, then $\rho$, the stopping annulus, $\eta$,
and finally the joint threshold $\varepsilon_0$. Choose
\[
 \varepsilon_\alpha
 \leq
 \min\left\{
 \frac{\varepsilon_0}{2},\gamma_*
 \right\}.
\]
Assume
\[
 |p-2|+|\gamma|\leq\varepsilon_\alpha.
\]
In the normalized argument, we further require
\[
 \|f\|_{L^\infty(Q_1)}
 \leq\frac{\varepsilon_0}{2}.
\]
Hence
\[
 |p-2|+|\gamma|+\|f\|_{L^\infty(Q_1)}
 \leq\varepsilon_0,
\]
so the smallness condition in \eqref{eq:normalized} is satisfied.
Moreover, this condition on the source is preserved under the
rescaling \eqref{eq:fk}.

We treat the following two cases separately.

\medskip
\noindent

{\it Case 1.} If $\gamma\geq0$, then $\vartheta_\gamma=2$.  By Proposition~\ref{prop:campanato-consequences}, we have
\[
 |Du(x,t) - Du(y,s)|
 \leq \mathrm{C} \bigl(|x-y|^\beta+|t-s|^{\frac{\beta}{2}}\bigr)
\]
and
\[
 |u(x,t)-u(x,s)|
 \leq \mathrm{C}  |t-s|^{\frac{1+\beta}{2}}.
\]
Thus the normalized solution belongs to
$C_{\mathrm{loc}}^{1+\beta,\frac{1+\beta}{2}} $.  Since $\beta> \alpha $, this implies $ u\in C_{\mathrm{loc}}^{1+\alpha,\frac{1+\alpha}{2}} $.

\medskip
\noindent

{\it Case 2.} If $\gamma<0$, then $\vartheta_\gamma=2-\beta\gamma$.  The spatial estimate
still has exponent $\beta$, while the temporal exponents are
\[
 \frac{\beta}{2-\beta\gamma}
 \quad\text{and}\quad
 \frac{1+\beta}{2-\beta\gamma}.
\]
Since $|\gamma|\leq\gamma_*$, the choice
\eqref{eq:timeconditions} gives
\[
 \frac{\beta}{2-\beta\gamma}
 \geq\frac{\beta}{2+\beta\gamma_*}
 \geq\frac{\alpha}{2}
\]
and
\[
 \frac{1+\beta}{2-\beta\gamma}
 \geq\frac{1+\beta}{2+\beta\gamma_*}
 \geq\frac{1+\alpha}{2}.
\]
Together with $\beta>\alpha$, this again gives $ u\in C_{\mathrm{loc}}^{1+\alpha,\frac{1+\alpha}{2}} $.
\end{proof}


\vspace{2mm}







\begin{thebibliography}{GPPSVG17}
\bibliographystyle{alpha}







\bibitem[ABM23]{ABM23}
M. Akman, A. Banerjee and I. H. Munive, Borderline gradient continuity for the normalized $ p$-parabolic operator, \newblock {\em J. Geom. Anal.},  \textbf{33} (2023), Art. 264.



\bibitem[ASU26]{ASU26}
C.~Alcantara, M.~Santos and J.~M.~Urbano, Gradient regularity for a class of singular or degenerate elliptic equations, \newblock {\em J. Differential Equations.}, \textbf{458} (2026), 114040.




\bibitem[AS26]{AS26}
C.~Alcantara and M.~Santos, Regularity for the normalized $ p$-Laplacian equation with an arbitrary degeneracy law, \newblock {\em Adv. Calc. Var.}, \textbf{19} (2026), 271--288.





\bibitem[AS22]{AS22}
P.~Andrade and M.~Santos, Improved regularity for the parabolic normalized $p$-Laplace equation, \newblock {\em Calc. Var. Partial Differential Equations.}, \textbf{61} (2022), no. 5, Paper No. 196, 13 pp.



\bibitem[AP18]{AP18}
A.~Attouchi and M.~Parviainen, H\"older regularity for the gradient of the inhomogeneous parabolic normalized $p$-Laplacian, \newblock {\em Commun. Contemp. Math.}, \textbf{20} (2018), no. 4, 1750035.



\bibitem[A20]{A20}
A.~Attouchi, Local regularity for quasi-linear parabolic equations in non-divergence form, \newblock {\em Nonlinear Anal.}, \textbf{199} (2020), 112051.


\bibitem[AR18]{AR18}
A.~Attouchi and E.~Ruosteenoja, Remarks on regularity for $ p$-Laplacian type equations in non-divergence form. \newblock {\em J. Differential Equations.}, \textbf{265} (2018), no. 5, 1922--1961.



\bibitem[AR20]{AR20}
A.~Attouchi and E.~Ruosteenoja, Gradient regularity for a singular parabolic equation in non-divergence form, \newblock {\em Discrete Contin. Dyn. Syst.}, \textbf{40} (2020), no. 10, 5955--5972.



\bibitem[CC95]{CC95}
L.~Caffarelli and X.~Cabr\'{e}, {\em Fully nonlinear elliptic equations}, volume 43. \newblock American Mathematical Society, 1995.





\bibitem[CIL92]{CIL92}
M.~G.~Crandall, H.~Ishii and P.-L.~Lions, User's guide to viscosity solutions of second order partial differential equations, \emph{ Bull. Amer. Math. Soc. (N.S.)}, \textbf{27} (1992), 1--67.





\bibitem[FZ21]{FZ21}
Y.~Fang and C.~Zhang, Gradient H\"older regularity for parabolic normalized $p(x,t)$-Laplace equation, \newblock {\em J. Differential Equations.}, \textbf{295} (2021), 211--232.



\bibitem[FP23]{FP23}
Y.~Feng, M.~Parviainen and S.~Sarsa, A systematic approach on the second order regularity of solutions to the general parabolic $p$-Laplace equation, \newblock {\em Calc. Var. Partial Differential Equations.}, \textbf{62} (2023), Art. 204.





\bibitem[IJS19]{IJS19}
C.~Imbert, T.~Jin and L.~Silvestre, H\"older gradient estimates for a class of singular or degenerate parabolic equations, \newblock {\em  Adv. Nonlinear Anal.}, \textbf{8} (2019), 845--867.


 \bibitem[CL12]{CL12}
       C. Imbert, L. Silvestre, {\em Introduction to fully nonlinear parabolic equations}, An introduction to the K\"ahler-Ricci flow, 7--88, Lecture Notes in Math., 2086, Springer, Cham, 2013.


\bibitem[JS17]{JS17}
T.~Jin and L.~Silvestre, H\"older gradient estimates for parabolic homogeneous $p$-Laplacian equations, \newblock {\em J. Math. Pures Appl.}, \textbf{108} (2017), 63--87.




\bibitem[LLYZ25]{LLYZ25}
S.-C.~Lee, Y.~Lian, H.~Yun and K.~Zhang, Boundary H\"older gradient estimates for parabolic $p$-Laplace type equations, \newblock {\em arXiv:2506.01018.}, 2025.




\bibitem[OS97]{OS97}
M.~Ohnuma, K.~Sato, Singular degenerate parabolic equations with applications to the $ p$-Laplace diffusion equation, \newblock {\em Comm. Partial Differential Equations.}, \textbf{22} (1997) 381--411.




\bibitem[PRS20]{PRS20}
E.~Pimentel, G.~Rampasso and M.~Santos, Improved regularity for the $ p$-Poisson equation, \newblock {\em Nonlinearity.}, \textbf{33} (2020), 3050--3061.




\bibitem[WYJ25]{WYJ25}
     J. Wang, Y. Yin and F. Jiang, Regularity of solutions to degenerate normalized $ p$-Laplacian equation with general variable exponents, \newblock {\em Potential Anal.}, \textbf{63} (2025), 1963--2000.







\bibitem[W92]{W92}
L. Wang, On the regularity theory of fully nonlinear parabolic equations. I, \newblock {\em Comm. Pure Appl. Math.}, \textbf{45} (1992), 27--76.




\bibitem[W26]{W26}
X.~Wang, H\"older estimates on the gradient of viscosity solutions of degenerate Bellman type equations involving $ p$-Laplacian type operators, \newblock {\em Nonlinear Anal.}, \textbf{271} (2026), Paper No. 114156, 16 pp.



%
%
%
%
%
%
%
%
%
%
%





\end{thebibliography}
\end{document}